\documentclass[2pt,a4paper]{article}
\usepackage{amsmath}
\usepackage{amsfonts}
\usepackage{amsthm}
\usepackage{amssymb}
\usepackage{booktabs,amsmath}

\def\XXint#1#2#3{{\setbox0=\hbox{$#1{#2#3}{\int}$}
     \vcenter{\hbox{$#2#3$}}\kern-.5\wd0}}

\usepackage[english]{babel}
\usepackage{xcolor}
\usepackage{enumerate}

%
%
%
%
\theoremstyle{plain}
\newtheorem{lemma}{Lemma}[section]
        \newtheorem{proposition}[lemma]{Proposition}
        \newtheorem{theorem}[lemma]{Theorem}
        \newtheorem{definition}{Definition}[section]
        \newtheorem{remark}[lemma]{Remark}


%
%
\theoremstyle{definition}
%
%
%
%
%
%
%
\oddsidemargin 0.0in \evensidemargin 0.0in
\textwidth 6.5in
\usepackage{tabularx}

\usepackage{amsmath,amsfonts,amssymb,amscd,lscape,amstext,amsthm}

%

%
%
\numberwithin{equation}{section}

%
%

%
%
%
%
%
\oddsidemargin 0.0in \evensidemargin 0.0in
{\setlength{\parindent}{0pt}
\title{ {\bf Determining an anisotropic conductivity by boundary measurements: stability at the boundary}}

\oddsidemargin 0.0in \evensidemargin 0.0in
{\setlength{\parindent}{0pt}
\author{Giovanni Alessandrini\thanks{Dipartimento di Matematica e Geoscienze, Universit\`{a} di Trieste, Italy. Email:alessang@units.it.}\qquad Romina Gaburro\thanks{Department of Mathematics and Statistics, University of Limerick, Ireland.  Email: romina.gaburro@ul.ie}\qquad\\ Eva Sincich\thanks{Dipartimento di Matematica e Geoscienze, Universit\`{a} di Trieste, Italy. Email:esincich@units.it.}}

\author{Giovanni Alessandrini\thanks{ Dipartimento di Matematica e Geoscienze, Universit\`{a} di Trieste, Italy. E-mail: \textsf{alessang@units.it}} \ \ \ \ Romina Gaburro\thanks{Department of Mathematics and Statistics, CONFIRM-Science Foundation Ireland, Health Research Institute (HRI), University of Limerick, Ireland.   E-mail: \textsf{romina.gaburro@ul.ie}}  \ \ \ \ Eva
Sincich\thanks{Dipartimento di Matematica e Geoscienze,
Universit\`a degli Studi di Trieste, Italy. E-mail: \textsf{esincich@units.it}} 
}

\linespread{1.6}

\date{}

\setlength{\parindent}{0pt}

\begin{document}

\maketitle

\noindent \textbf{Abstract:} 
We consider the inverse problem of determining, the possibly anisotropic, conductivity of a body by means of the so called local Neumann to Dirichlet map on a curved portion $\Sigma$ of the boundary. Motivated by the uniqueness result for piecewise constant anisotropic conductivities proved in \cite{Al-dH-G}, we provide a H\"{o}lder stability estimate on $\Sigma$ when the conductivity is a priori known to be a constant matrix near $\Sigma$.

\medskip

\medskip
 
\noindent \textbf{Mathematical Subject Classifications (2010)}: Primary: 35R30; Secondary: 35J25, 86A22.

\medskip

\medskip

\noindent \textbf{Key words}:  Calder\'{o}n problem, anisotropic conductivities, stability at the boundary.


\section{Introduction}\label{sec1}
\setcounter{equation}{0}
Given $\varphi :\partial\Omega\longrightarrow\mathbb{R}$, with zero average, consider the Neumann problem
\begin{displaymath}
\left\{ \begin{array}{ll} \textnormal{div}(\sigma\nabla
u)=0, &
\textrm{$\textnormal{in}\quad\Omega$},\\
\sigma\nabla u\cdot\nu = \varphi ,& \textrm{$\textnormal{on}\quad{\partial\Omega},$}
\end{array} \right.
\end{displaymath}
where $\sigma = \left\{\sigma_{ij}(x)\right\}_{i,j=1}^n$, with $x\in\Omega$, satisfies the uniform ellipticity condition
\begin{eqnarray}\label{ellitticita'sigma}
\lambda^{-1}\vert\xi\vert^{2}\leq{\sigma}(x)\xi\cdot\xi\leq\lambda\vert\xi\vert^{2}, \ for\:almost\:every\:x\in\Omega,\nonumber \ for\:every\:\xi\in\mathbb{R}^{n},
\end{eqnarray}
for some positive constant $\lambda$. Here, a standard, variational, functional framework is understood. Details will be presented in what follows. Electrical Impedance Tomography (EIT) is the inverse problem of determining the conductivity $\sigma$ when the Neumann-to-Dirichlet (N-D) map
\[\mathcal{N}_{\sigma}: \varphi\longrightarrow u\big\vert_{\partial\Omega}\]
is given,\cite{C}. It is well-known that if $\sigma$ is allowed to be anisotropic, i.e. a full matrix, although symmetric, then it is not uniquely determined by $\mathcal{N}_{\sigma}$. In fact, if $\Phi :\overline\Omega\longrightarrow\overline\Omega$ is a diffeomorphism such that $\Phi\big\vert_{\partial\Omega} = I$, then $\sigma$ and its push-forward under $\Phi$,
\[\Phi^{*}\sigma = \frac{(D\Phi )\sigma(D\Phi)^T}{\det(D\Phi)}\circ \Phi^{-1}\]
give rise to the same N-D map. This construction is due to Tartar, as reported by Kohn and Vogelius \cite{Ko-V1}. A prominent line of research investigates the determination of $\sigma$ 
modulo diffeomorphism which fix the boundary, in this respect we refer to the seminal paper of Lee and Uhlmann \cite{Le-U}. From another point of view, anisotropy cannot be neglected in applications, such as medical imaging or geophysics. It is therefore interesting to investigate possible kinds of structural assumptions (physically motivated) under which unique determination of $\sigma$ from $\mathcal{N}_{\sigma}$ is restored.

In \cite{Al-dH-G} the case of a piecewise constant conductivity $\sigma$ was treated, and assuming that the interfaces of discontinuity contain portions of curved (non flat) hypersurfaces, uniqueness was proven. Subsequently \cite{Al-dH-G-S}, uniqueness was proven also in cases of a layered structure with unknown interfaces. We refer to these two papers for a bibliography on the relevance of anisotropy in applications.

It is still an open problem, to prove, in such settings, estimates of stability. Indeed, a line of research initiated by Alessandrini and Vessella \cite{Al-V} in the isotropic case (i.e.: $\sigma = \gamma I$, with $\gamma$ scalar), suggests that also in the setting of Alessandrini-de Hoop-Gaburro \cite{Al-dH-G} a Lipschitz stability estimate might hold. Such a generalization, however, does not appear to be an easy task, because isotropy intervenes in many steps of the proof in \cite{Al-V}.

In this note we treat the first step in a program to prove stability for piecewise constant anisotropic conductivity with curved interfaces. More precisely, assuming $\sigma$ constant in a neighborhood $\mathcal{U}$ of a curved portion $\Sigma$ of the boundary $\partial\Omega$, we show that $\sigma\big\vert_{\mathcal{U}}$ depends in a H\"older fashion on the local N-D map $\mathcal{N}_{\sigma}^{\Sigma}$ (a precise definition will be given in due course).

Since Kohn and Vogelius \cite{Ko-V}, \cite{Ko-V2} and Alessandrini \cite{A}, \cite{Al}, it is customary to treat the uniqueness and stability, at the boundary, as a first step towards determination in the interior. And also in the anisotropic case we wish to mention the results of Kang and Yun \cite{K-Y} who proved reconstruction and stability at the boundary  up to diffeomorphisms which keep the boundary fixed.

Here, assuming a quantitative formulation of non-flatness of $\Sigma$, we are able to stably determine the full conductivity matrix $\sigma$ (near $\Sigma$). More precisely, we shall assume that there exists three points $P_1, P_2, P_3\in\Sigma$ such that the corresponding unit normal vectors to $\Sigma$, $\nu(P_1), \nu(P_2), \nu(P_3)$ are quantitatively pairwise distinct. 

Our argument here is based on various new features. As noticed in \cite{Al-dH-G}, the local N-D map $\mathcal{N}_{\sigma}^{\Sigma}$ is an integral operator whose kernel $K$ differs from the well-known Neumann kernel $N$ by a bounded correction term.

The common feature of the two kernels is the character of their singularity which in turn encapsulates information on the tangential part of the metric $\left\{g_{ij}\right\}_{i,j = 1}^n$ associated to the conductivity $\sigma$
\begin{equation}\label{metrica}
g_{ij} = \left(\det\sigma\right)^{\frac{1}{n-2}} (\sigma^{-1}_{ij})
\end{equation}
in dimension $n\geq 3$. By testing $\mathcal{N}_{\sigma}^{\Sigma}$ on suitable combination of mollified $\delta$ functions, we achieve a quantitative evaluation of the tangential component of $\left\{g_{ij}\right\}$. Next, by exploiting the quantitative notion of `non-flatness' of $\Sigma$, we show that the full metric $\left\{g_{ij}\right\}$ can be recovered from three tangential samples at three different points with sufficiently distinct tangent planes, or, equivalently, pairwise distinct normal vectors.\\The paper is organized as follows. Section 2 contains the main definitions, including the quantified notion of non-flatness of $\Sigma$ where we localize the measurements $\mathcal{N}_{\sigma}^{\Sigma}$ (Section \ref{Notation and assumptions}). This section also contains the statement of our main result of stability at the boundary of anisotropic conductivities $\sigma$ that are constant near $\Sigma$ in terms of $\mathcal{N}_{\sigma}^{\Sigma}$ (Section \ref{local stability main result}).
Section \ref{mollifiers on graphs} is devoted to the construction of mollified delta functions on $\Sigma$. The proof of the main result is a two steps procedure. In the first step, contained in Section \ref{stability tangential g}, we stably  recover the tangential component of $g$ in terms of $\mathcal{N}_{\sigma}^{\Sigma}$ . The argument of this proof is based on estimating the asymptotic behaviour of the Neumann Kernel $N(\cdot, y)$ of $L=\mbox{div}(\sigma\nabla\cdot)$  and its derivative, near the pole $y\in\Sigma$, and the sampling $\mathcal{N}_{\sigma}^{\Sigma}$ on suitable combinations of the mollifiers  given in Section \ref{mollifiers on graphs}. In the second step, discussed in Section \ref{stability full g}, exploiting the non flatness condition on $\Sigma$ as well as the structure of the metric $g$, we derive the stability on the boundary for the full metric $g$ which in turn leads to the stable determination of the conductivity $\sigma$ on $\Sigma$ .



\section{Main Result}\label{sec2}
\setcounter{equation}{0}
\subsection{Notation and assumptions}\label{Notation and assumptions}
In several places in this manuscript it will be useful to single out one coordinate direction. To this purpose, the following notations for points $x\in \mathbb{R}^n$ will be adopted. For $n\geq 3$, a point $x\in \mathbb{R}^n$ will be denoted by $x=(x',x_n)$, where $x'\in\mathbb{R}^{n-1}$ and $x_n\in\mathbb{R}$. Moreover, given a point $x\in \mathbb{R}^n$ and given $a,b\in \mathbb{R}$, we shall denote with $B_r(x)$, $B_r(x')$ the open balls in $\mathbb{R}^{n}, \mathbb{R}^{n-1}$ centred at $x$, $x'$, respectively, with radius $r$ and by $ Q_{a,b}(x)$ the cylinder $\{y=(y',y_n)\in\mathbb{R}^n \ : \  \large| x'-y'\large|<a \ ; \  |x_n-y_n|<b\}$. It will also be understood that $B_r=B_r(0)$ , $B^{'}_r = B_r(0')$ and $Q_{a,b}=Q_{a,b}(0)$.\\

We shall assume throughout that $\Omega\subset\mathbb{R}^n$, with $n\geq 3$, is a bounded domain with Lipschitz boundary, as per definition \ref{def lipschitz} below.

\begin{definition}\label{def lipschitz}
We will say that $\partial\Omega$ is of Lipschitz class with constants $r_0, L>0$, if for every $P\in\partial\Omega$, there exists a rigid transformation of coordinates under which we have $P=0$ and

\[\Omega\cap Q_{r_0,r_0L} = \left\{(x',x_n)\in Q_{r_0,r_0L}\:|\: x_n> \varphi(x') \right\},\]

where $\varphi$ is a Lipschitz continuous function on $B'_{r_0}$ satisfying
\[\varphi(0) = 0\]
and
\[ |\varphi(x')- \varphi(y')|\le L |x'-y'| , \ \ \mbox{for every}\ \  x',y' \in B'_{r_0} .\]


\end{definition}

We fix an open non-empty subset $\Sigma$ of $\partial\Omega$ (where the measurements in terms of the local N-D map are taken). A precise definition of the N-D map and its local version with respect to $\Sigma$ are given below.


\subsubsection{The Neumann-to-Dirichlet map.}\label{N-to-D}


 
Denoting by $Sym_n$ the class of $n\times n$ symmetric real valued matrices, we assume
that $\sigma\in L^{\infty}(\Omega\:,Sym_{n})$ satisfies the ellipticity condition \eqref{ellitticita'sigma}.We consider the function spaces
\begin{equation*}
_{0}H^{\frac{1}{2}}(\partial \Omega)=\left\{f\in
H^{\frac{1}{2}}(\partial \Omega)\vert\:\int_{\partial\Omega}f\:
=0\right\},
\end{equation*}
\begin{equation*}
_{0}H^{-\frac{1}{2}}(\partial \Omega)=\left\{\psi\in
H^{-\frac{1}{2}}(\partial \Omega)\vert\:\langle\psi,\:1\rangle=0
\right\}.
\end{equation*}
The global Neumann-to-Dirichlet map is then defined as follows.

\begin{definition}\label{definition N-D}
The Neumann-to-Dirichlet (N-D) map associated with $\sigma$,
\[\mathcal{N}_{\sigma}:\  _{0}H^{-\frac{1}{2}}(\partial \Omega)\longrightarrow \:_{0}H^{\frac{1}{2}}(\partial \Omega)\]
is characterized as the selfadjoint operator satisfying
\begin{equation}\label{ND globale}
\langle\psi,\:\mathcal{N}_{\sigma}\psi\rangle\:=\:\int_{\:\Omega} \sigma(x)
\nabla{u}(x)\cdot\nabla{u}(x)\:dx,
\end{equation}
for every $\psi\in\: _{0}H^{-\frac{1}{2}}(\partial \Omega)$, where 
$u\in{H}^{1}(\Omega)$ is the weak solution to the Neumann problem
\begin{equation}\label{N bvp}
\left\{ \begin{array}{lll}\displaystyle\textnormal{div}(\sigma\nabla u)=0, &
\textrm{$\textnormal{in}\quad\Omega$},\\
\displaystyle\sigma\nabla u\cdot\nu\vert_{\partial\Omega}=\psi, &
\textrm{$\textnormal{on}\quad{\partial\Omega}$},\\
\displaystyle\int_{\partial\Omega}u\: =0
\end{array} \right.
\end{equation}
and $\langle\cdot,\cdot\rangle$ denotes the $L^{2}(\partial\Omega)$-pairing between $H^{\frac{1}{2}}(\partial\Omega)$ and its dual $H^{-\frac{1}{2}}(\partial\Omega)$.
\end{definition}


For the local version of the N-D map, we consider an open portion of $\partial\Omega$, $\Sigma$, and, denoting by
$\Delta=\partial\Omega\setminus\overline\Sigma$, we introduce the subspace of $H^{\frac{1}{2}}(\partial \Omega)$,
\[H^{\frac{1}{2}}_{co}(\Delta)=\left\{f\in H^{\frac{1}{2}}(\partial \Omega)\:|\: \mbox{supp}(f)\subset\Delta\right\}\]
together with its closure in $H^{\frac{1}{2}}(\partial\Omega)$, $H^{\frac{1}{2}}_{00}(\Delta)$. We also introduce
\begin{equation}
_{0}H^{-\frac{1}{2}}(\Sigma)=\left\{\psi\in \:
_{0}H^{-\frac{1}{2}}(\partial\Omega)\vert\:\langle\psi,\:f\rangle=0,\quad\textnormal{for\:any}\:f\in
H^{\frac{1}{2}}_{00}(\Delta)\right\},
\end{equation}
that is the space of distributions $\psi \in
H^{-\frac{1}{2}}(\partial\Omega)$ which are supported in
$\overline\Sigma$ and have zero average on $\partial\Omega$. The local
N-D map is then defined as follows.
\begin{definition}
The local Neumann-to-Dirichlet map associated with $\sigma$,
$\Sigma$ is the operator $\mathcal{N}_{\sigma}^{\Sigma}:\:
_{0}H^{-\frac{1}{2}}(\Sigma)\longrightarrow
\big(_{0}H^{-\frac{1}{2}}(\Sigma)\big)^{\ast}\subset{_{0}H}^{\frac{1}{2}}(\partial\Omega)$
given  by
\begin{equation}
\langle \mathcal{N}_{\sigma}^{\Sigma}\;\varphi,\;\psi\rangle=\langle
\mathcal{N}_{\sigma}\;\varphi,\;\psi\rangle,
\end{equation}
for every $\varphi, \psi\in\:_{0}H^{-\frac{1}{2}}(\Sigma)$.
\end{definition}
Given $\sigma^{(i)}\in L^{\infty}(\Omega\:,Sym_{n})$, satisfying \eqref{ellitticita'sigma}, for $i=1,2$, the following equality holds true.
\begin{equation}\label{Alessandrini identity local N-D}
\left<\psi_1,\left(\mathcal{N}_{\sigma^{(2)}}^{\Sigma} - \mathcal{N}_{\sigma^{(1)}}^{\Sigma}\right)\psi_2\right> = \int_{\Omega} \left(\sigma^{(1)}(x) - \sigma^{(2)}(x)\right)\nabla u_1(x)\cdot\nabla u_2(x),
\end{equation}
for any $\psi_i\in\: _{0}H^{-\frac{1}{2}}(\Sigma)$, for $i=1,2$ and $u_i\in H^{1}(\Omega)$ being the unique weak solution to the Neumann problem
\begin{equation}
\left\{ \begin{array}{lll}\displaystyle\textnormal{div}(\sigma^{(i)}\nabla u_i)=0, &
\textrm{$\textnormal{in}\quad\Omega$},\\
\displaystyle\sigma^{(i)}\nabla u_i\cdot\nu\vert_{\partial\Omega}=\psi_i, &
\textrm{$\textnormal{on}\quad{\partial\Omega}$},\\
\displaystyle\int_{\partial\Omega}u_i\: =0.
\end{array} \right.
\end{equation}

\subsubsection{Non-flatness of $\Sigma$\label{non-flatness}}

\begin{definition}\label{def C2 alpha boundary}
Let $\Omega$ $\subset\mathbb{R}^n$ be as above. Given $\alpha$, $\alpha\in(0,1)$, we say that a portion $\Sigma$ of $\partial\Omega$ is of class $C^{2,\alpha}$ with constants $\rho$, $M>0$ if ,up to a rigid transformation of coordinates, $\Sigma$ is an $(n-1)$ dimensional $C^{2,\alpha}$ manifold with chart $({B'_{\rho}}, \varphi)$, where $\varphi: B'_{\rho}\subset\mathbb{R}^{n-1}\rightarrow\mathbb{R}$ is such that
\begin{equation}\label{norm phi}
 {\varphi(0) =  \large|\nabla\varphi(0)\large|}=0 \qquad  \|D^2 \varphi(x') - D^2 \varphi(y')\| \le M |x'-y'|^{\alpha} \ \ \mbox{for all }\  x',y'\in B'_{\rho} .
\end{equation}
We will also write
\begin{equation}\label{Sigma manifold}
\Sigma=\left\{(x', \varphi(x'))\:|\: |x'|\leq {\rho}\right\}.
\end{equation}



\end{definition}

For any $P\in\partial\Omega$, we will denote by $\nu(P)$ the outer unit normal to $\partial\Omega$ at $P$. 

\begin{definition}\label{definition non-flatness}
Given $\Sigma$ as above, we shall say that such a portion of a surface is non-flat (and equivalently the function $\varphi$) if, there exist three points $P_1, P_2, P_3\in\Sigma$ and a constant $C_0$, $0<C_0<1$, such that
\begin{equation}\label{nu1}
\nu(P_1)\cdot\nu(P_2) \le 1-C_0,
\end{equation}
\begin{equation}\label{nu2}
 \nu(P_1)\cdot\nu(P_3)\le 1-C_0,
 \end{equation}
\begin{equation}\label{nu3}
 \nu(P_2)\cdot\nu(P_3) \le 1-C_0 \ .
\end{equation}

\end{definition}


\subsection{Local stability at the boundary}\label{local stability main result}

It will be convenient to define  throughout this paper the following quantity
\begin{equation}\label{E}
E:=||\sigma^{(1)}-\sigma^{(2)}||_{L^{\infty}(\Omega)}.
\end{equation}

We will assume that there is a point $y\in\partial\Omega$ such that, up to a rigid transformation, $y=0$, and

\begin{equation}\label{definizione Sigma}
\Sigma = \partial\Omega\cap B_{\rho}
\end{equation}

is a non-flat portion of $\partial\Omega$ of class $C^{2,\alpha}$ with constants $\rho>0$, $M>0$ and $C_0>0$ as per definitions \ref{def C2 alpha boundary}, \ref{definition non-flatness}.

\begin{definition}
The set of parameters   $ \{\lambda, r_0, \:L, \:\rho,\:M,\: C_0,\:n\}$
 is called the a-priori data.
\end{definition}
The following notation will also be adopted throughout the manuscript.
\begin{enumerate}[i)]
\item A constant C is said to be uniform if it depends on the \textit{a-priori} data only.
\item We denote by $\mathcal{O}(t)$ a function $g$ such that
\begin{equation}\label{def O}
|g(t)|\leq Ct,\qquad\textnormal{for\:all}\:t,\quad 0<t<t_0,
\end{equation}
where $C, t_0>0$ are uniform constants.
\item We set, for $x'\in\mathbb{R}^{n-1}$,
\begin{displaymath}
\delta(x')=\left\{ \begin{array}{ll} C e^{\frac{1}{(|x'|^2-1)}} &
\textrm{$\textnormal{if} \quad |x'|< 1$},\\
0 & \textrm{$\textnormal{if}\quad{|x'|\ge 1},$}
\end{array} \right.
\end{displaymath}
where $C>0$ is a constant such that $\int_{\mathbb{R}^{n-1}} \delta(x')dx' = 1$.
\end{enumerate}

 In what follows, $\| \cdot\|_{\mathcal{L}({\mathcal{B}}_1, {\mathcal{B}}_2)}$ will denote the operator norm for linear operators between Banach spaces ${\mathcal{B}}_1, {\mathcal{B}}_2$.

Our main result is stated below.

\begin{theorem}\label{teorema principale}
Let $y$, $\rho$ and $\Sigma$ be defined by \eqref{definizione Sigma}. Let $\sigma^{(i)} = \{\sigma^{(i)}_{l\:m}(x)\}_{l,m=1,\dots n}$, $x\in\Omega$ satisfy \eqref{ellitticita'sigma}, and assume that $\sigma^{(i)}$ is constant on ${\overline{\Omega}}\cap B_{\rho}(y)$, for $i=1,2$.  If $\mathcal{N}^{\Sigma}_{\sigma^{(i)}}$ is the local N-D map corresponding to $\sigma^{(i)}$, for $i=1,2$, then
\begin{equation}\label{stima puntuale g}
\left|\left| \sigma^{(1)}(y) - \sigma^{(2)}(y)\right|\right|_{\mathcal{L}({\mathbb{R}}^n, {\mathbb{R}}^n)} \leq C E^{1-\beta} ||\mathcal{N}^{\Sigma}_{\sigma^{(1)}} - \mathcal{N}^{\Sigma}
_{\sigma^{(2)}}||^{\beta}_{\mathcal{L}\left(_{0}H^{-\frac{1}{2}}(\partial\Omega), _{0}H^{\frac{1}{2}}(\partial\Omega)\right)},
\end{equation}
where $C$ is a positive uniform constant and $\beta=\frac{1}{n-1}$.
\end{theorem}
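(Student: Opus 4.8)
The plan is to reduce the theorem to the recovery of the single constant symmetric matrix $g$ associated with $\sigma$ through \eqref{metrica}, by first reading off its tangential traces at the three points $P_1,P_2,P_3$ from the singularity of the local N--D map, and then assembling these traces into the full matrix. Throughout I will use that, on $\overline\Omega\cap B_\rho(y)$, both $\sigma^{(1)}$ and $\sigma^{(2)}$ are constant, so that the associated metrics $g^{(i)}$ are constant there and it suffices to estimate $\|g^{(1)}-g^{(2)}\|$; the conductivity bound then follows since $\sigma=(\det g)^{1/2}g^{-1}$ depends Lipschitz-continuously on $g$ within the ellipticity class \eqref{ellitticita'sigma}.

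\emph{First step (tangential traces, Section \ref{stability tangential g}).} For a point $P\in\Sigma$ with unit normal $\nu(P)$ and a fixed tangential polarization, I would insert into the Alessandrini-type identity \eqref{Alessandrini identity local N-D} suitable combinations of the mollified $\delta$-functions constructed in Section \ref{mollifiers on graphs}, concentrated at $P$ at a scale $h$. Since the gradients $\nabla u_1,\nabla u_2$ of the corresponding potentials localize near $P$, where $\sigma^{(1)}-\sigma^{(2)}$ equals the constant matrix $\sigma^{(1)}(y)-\sigma^{(2)}(y)$, the leading contribution to the right-hand side of \eqref{Alessandrini identity local N-D} can be computed from the explicit constant-coefficient (half-space) Neumann kernel, whose singularity is governed by the quadratic form $(\sigma^{(i)})^{-1}(x-y)\cdot(x-y)$ and therefore encodes precisely the tangential component $g_T^{(i)}(P)$ of the metric. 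The outcome is an approximate identity in which the tangential trace difference appears at leading order, up to a measurement term controlled by $\|\mathcal N^{\Sigma}_{\sigma^{(1)}}-\mathcal N^{\Sigma}_{\sigma^{(2)}}\|$ through the $H^{-1/2}$-norm of the concentrated sources, and up to a modeling remainder of size $E$ coming from the curvature of $\Sigma$, the far field, and the finite width of the mollifier. This yields, at each $P_k$, an estimate of the form $|g_T^{(1)}(P_k)-g_T^{(2)}(P_k)|\le C\big(hE+h^{-(n-2)}\|\mathcal N^{\Sigma}_{\sigma^{(1)}}-\mathcal N^{\Sigma}_{\sigma^{(2)}}\|\big)$ for $0<h<\rho$; minimizing over $h$ produces the H\"older exponent $\beta=\tfrac1{n-1}$, dictated by the $(n-1)$-dimensional scaling of the surface mollifiers.

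I expect this first step to be the main obstacle. The delicate point is the sharp asymptotic analysis of the Neumann kernel $N(\cdot,y)$ and of its gradient near the pole $y\in\Sigma$ for a constant but genuinely anisotropic $\sigma$ on a curved domain: one must extract the leading singular profile, identify its coefficient as the tangential metric, and control the remainder uniformly in the a priori data, precisely where anisotropy and the curvature of $\Sigma$ interact. It is also here that the bounded correction term relating the local N--D kernel to $N$ must be shown not to disturb the leading order.

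\emph{Second step (full metric, Section \ref{stability full g}).} It remains to pass from the three tangential traces to the full constant matrix $g$. The traces $g_T(P_k)$ are the restrictions of the single symmetric form $g$ to the tangent hyperplanes $T_{P_k}=\nu(P_k)^{\perp}$; equivalently, one knows $g(v,v)$ for every $v\in T_{P_1}\cup T_{P_2}\cup T_{P_3}$. I would argue that, under the non-flatness hypothesis of Definition \ref{definition non-flatness}, the symmetrized rank-one tensors $\{v\odot v:\ v\in T_{P_1}\cup T_{P_2}\cup T_{P_3}\}$ span the whole space of symmetric $n\times n$ matrices, so that $g$ is overdetermined and uniquely reconstructed. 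Concretely, a matrix in the common orthogonal complement would be a symmetric tensor of the form $\nu(P_k)\odot u_k$ simultaneously for $k=1,2,3$; a rank argument shows that a nonzero such tensor forces either two of the normals to be parallel or all three to be coplanar with coincident directions, both excluded quantitatively by \eqref{nu1}--\eqref{nu3}, and the constant $C_0$ controls the conditioning of the reconstruction. This gives $\|g^{(1)}-g^{(2)}\|\le C(C_0,n)\max_k|g_T^{(1)}(P_k)-g_T^{(2)}(P_k)|$, and combining it with the first step and with the Lipschitz dependence $\sigma=(\det g)^{1/2}g^{-1}$ yields \eqref{stima puntuale g}.
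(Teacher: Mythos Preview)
Your two-step architecture matches the paper, and your final reduction $\|\sigma^{(1)}-\sigma^{(2)}\|\le C\|g^{(1)}-g^{(2)}\|$ is exactly how the proof closes. The difficulty is in how you propose to run the first step.

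You plan to compute the \emph{right-hand side} of \eqref{Alessandrini identity local N-D}, i.e.\ the volume integral $\int_\Omega(\sigma^{(1)}-\sigma^{(2)})\nabla u_1\cdot\nabla u_2$, with $u_i$ built from mollified deltas at a single point $P$. That quantity is, to leading order in the scale $h$, the constant matrix $\sigma^{(1)}-\sigma^{(2)}$ contracted against a tensor $\int \nabla N_1\otimes\nabla N_2$ which depends nonlinearly on \emph{both} $g^{(1)}$ and $g^{(2)}$; no tangential direction is being varied, and there is no reason this isolates $g_T^{(1)}-g_T^{(2)}$. The paper does \emph{not} use \eqref{Alessandrini identity local N-D} here. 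It uses instead the kernel representation of $\mathcal N_\sigma$: testing $(\mathcal N^\Sigma_{\sigma^{(1)}}-\mathcal N^\Sigma_{\sigma^{(2)}})$ against differences of mollified deltas at \emph{four} points $x,y,w,z\in\Sigma$ (differences are forced by the zero-mean constraint and by the additive constant in $N_\sigma$) produces a smoothed version of $(K_1-K_2)(x,y,w,z)$, with $K_\sigma$ as in \eqref{def K}. The $hE$ remainder you anticipate is then supplied by the separate pointwise bound $|(N_1-N_2)(x,y)|\le CE|x-y|^{2-n}$ and its gradient analogue (Lemma~\ref{lemma stima gradiente N,K}); combining this with the $H^{-1/2}$ bound \eqref{stima H -1/2 delta} and optimizing in $h$ gives $|x-y|^{n-2}|K_1-K_2|\le C E^{1-\beta}\varepsilon^\beta$. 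Only \emph{after} this uniform bound does one let $x\to y$ along a tangential direction $\xi'$ and invoke $N_i\sim 2\Gamma_i$ from Theorem~\ref{theorem neumann function holder} to read off $|(g^{(1)}-g^{(2)})\xi'\cdot\xi'|$. So the tangential metric is extracted from the pointwise asymptotics of the kernel $N_1-N_2$ at two nearby boundary points, not from the Alessandrini volume integral at a single concentration point.

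On the second step, your spanning argument for $\{v\odot v:\ v\in T_{P_1}\cup T_{P_2}\cup T_{P_3}\}$ in $Sym_n$ is a legitimate and more conceptual alternative to the paper's explicit coordinate computation in Proposition~\ref{full stability metric}. One refinement of your rank reasoning: a nonzero symmetric $A$ annihilating all three tangent hyperplanes has rank $\le 2$, and in the rank-$2$ case its range is a fixed $2$-plane $V$ containing all $\nu(P_k)$; but on $V$ the requirement $A=\nu\odot u$ is equivalent to $A\nu^\perp\cdot\nu^\perp=0$, which for a nondegenerate $2\times2$ form holds for at most two directions $\nu$. Thus three pairwise non-parallel normals already suffice, even when coplanar --- this is precisely the situation the paper handles through the matrix $\mathcal A_2$ in case \eqref{condizioneb}. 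To make your bound quantitative in $C_0$ you still need either a compactness argument over admissible normal triples or an explicit inverse; the paper chooses the latter.
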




\section{Construction of mollifiers on a graph and their $H^{-\frac{1}{2}}$-norm.}\label{mollifiers on graphs}

For any two points $\xi, x\in\Sigma$, with 
\begin{equation}\label{punti su Sigma}
\xi=(\xi',\varphi(\xi'))\quad ;\quad x=(x',\varphi(x'))
\end{equation}
and $\tau>0$, we denote
\begin{equation}\label{delta sul grafico}
\delta_{\tau}(\xi,x)=C_{\tau}(\xi',x')\delta\left(\frac{\xi' -x'}{\tau}\right)
\end{equation}
and we choose $C_{\tau}$ in such a way that
\begin{equation}\label{integrale delta 1}
\int_{\Sigma} \delta_{\tau}(\xi,x)dS(\xi)=1,\qquad\textnormal{whenever}\quad B_{\tau}(x')\subset B_{\rho}.
\end{equation}
To compute $C_{\tau}$ we form
\begin{equation}\label{determinazione Ctau 1}
1=\int_{\Sigma} \delta_{\tau}(\xi,x)dS(\xi)=\int_{B_{\rho}} C_{\tau}(\xi',x')\delta\left(\frac{\xi' - x'}{\tau}\right)\sqrt{1+|\nabla\varphi(\xi')|^2}d\xi'.
\end{equation}
Denoting $q(\xi')=\sqrt{1+|\nabla\varphi(\xi')|^2}$, we set
\begin{equation}\label{Ctau}
C_{\tau}(\xi',x')=\frac{\tau^{1-n}}{q(\xi')},
\end{equation}
so that
\begin{equation}\label{integrale delta 2}
\int_{\Sigma} \delta_{\tau}(\xi,x)dS(\xi)=\int_{\mathbb{\rho}^{n-1}}\tau^{1-n}\delta\left(\frac{\xi'-x'}{\tau}\right)d\xi' =1.
\end{equation}
\begin{remark}
Notice that $q(\xi')=q(x')+\mathcal{O}(\tau^{\alpha})$ on $B_{\tau}(x')$, hence $q(\xi') = \mathcal{O}(1)$ and therefore $C_{\tau}(\xi', x')=\mathcal{O}(\tau^{1-n})$.
\end{remark}

We define the $H^{-\frac{1}{2}}$-norm of an element  $f\in H^{-\frac{1}{2}}(\partial \Omega)$ as follows
\begin{equation}\label{norma H -1/2}
||f||^{2}_{H^{-\frac{1}{2}}(\partial\Omega)}=\int_{\partial \Omega} h(x)( f(x) -\bar{f} ) \:dS(x)= \int_{\Omega} |\nabla h(x)|^2\:dx,
\end{equation}
where 
\begin{equation}\label{f media integrale}
\bar{f}=\frac{1}{|\partial\Omega|} \int_{\partial\Omega}f(x)\:dS(x)
\end{equation} 
and $h$ solves
\begin{displaymath}
\left\{ \begin{array}{ll} \Delta h=0, &
\textrm{$\textnormal{in}\quad\Omega$},\\
\frac{\partial h}{\partial\nu}=f -\bar{f}, & \textrm{$\textnormal{on}\quad{\partial\Omega}.$}
\end{array} \right.
\end{displaymath}

Recall that, for $y\in\Omega$, the Neumann kernel $N^{\Omega}_{0}(\cdot,\:y)$ for the Laplacian $\Delta$ in $\Omega$ is defined, to be the distributional solution to
\begin{displaymath}\label{def neumann kernel}
\left\{ \begin{array}{ll}
\Delta\:N^{\Omega}_{0}(\cdot,y)=-\delta(\cdot -y), & \textnormal{in}\quad\Omega\\
\frac{\partial N^{\Omega}_{0}(\cdot, y)}{\partial\nu}= -\frac{1}{\vert\partial\Omega\vert},
& \textnormal{on}\quad{\partial\Omega},
\end{array} \right.
\end{displaymath}
where we impose the normalization
\[\int_{\partial\Omega} N^{\Omega}_{0}(\cdot,y)\:dS(\cdot)=0,\]
and that for $y\in\partial\Omega$, it solves

\begin{displaymath}
\left\{ \begin{array}{ll}
\Delta\:N^{\Omega}_{0}(\cdot,y)=0, & \textnormal{in}\quad\Omega\\
\frac{\partial N^{\Omega}_{0}(\cdot, y)}{\partial\nu}= \delta(\cdot -y) - \frac{1}{\vert\partial\Omega\vert},
& \textnormal{on}\quad{\partial\Omega}.
\end{array} \right.
\end{displaymath}

Hence we can write
\begin{eqnarray}
h(x) &=& \int_{\partial\Omega} N^{\Omega}_{0}(x,\xi) (f(\xi)-\bar{f})\:dS(\xi)=\int_{\partial\Omega} N^{\Omega}_{0}(x,\xi) f(\xi) \:dS(\xi),
\end{eqnarray}
since $N^{\Omega}_{0}(x, \cdot)$ and $h$ have zero average on $\partial\Omega$. Hence
\begin{equation}\label{norma H -1/2 bis}
||f||^{2}_{H^{-\frac{1}{2}}(\partial\Omega)}=\int_{\partial\Omega\times\partial\Omega} N^{\Omega}_{0}(x,\xi) f(\xi) f(x)\:dS(x)\:dS(\xi).
\end{equation}
We are now in the position to estimate the behaviour of the $H^{-\frac{1}{2}}(\partial\Omega)$ norm of the mollified delta function $\delta_{\tau}(\cdot,x)$, with $x\in\Sigma$, in terms of $\tau$.

\begin{lemma}
Given $x\in\Sigma$ such that $B_{\tau}(x')\subset B'_{\rho}$, we have
\begin{equation}\label{stima H -1/2 delta}
||\delta_{\tau}(\cdot, x)||^{2}_{H^{-\frac{1}{2}}(\partial\Omega)}\leq C \tau^{2-n},
\end{equation}
where $C>0$ is a uniform constant.
\end{lemma}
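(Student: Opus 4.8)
The plan is to insert $f=\delta_{\tau}(\cdot,x)$ into the quadratic-form representation \eqref{norma H -1/2 bis} and then bound the resulting double integral by combining the pointwise size of the mollifier with the singular behaviour of the Neumann kernel near its pole. First I would record the pointwise estimate on the mollifier: by the Remark we have $C_{\tau}(\xi',x')=\mathcal{O}(\tau^{1-n})$, and since $\delta$ is bounded by $\|\delta\|_{L^{\infty}}$, it follows that $|\delta_{\tau}(\xi,x)|\le C\tau^{1-n}$ uniformly in $\xi\in\Sigma$, with support contained in $\{\xi=(\xi',\varphi(\xi'))\,:\,|\xi'-x'|<\tau\}$. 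The essential analytic input, on the other hand, is the upper bound $N^{\Omega}_{0}(\eta,\xi)\le C|\eta-\xi|^{2-n}$ for $\eta,\xi\in\partial\Omega$ near the diagonal with $n\ge 3$; this reflects the fact that the Neumann kernel agrees, up to a less singular (bounded) harmonic correction, with the fundamental solution $c_n|\eta-\xi|^{2-n}$ of the Laplacian.

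Granting these two ingredients, \eqref{norma H -1/2 bis} applied to $f=\delta_{\tau}(\cdot,x)$ yields
\[
\|\delta_{\tau}(\cdot,x)\|^{2}_{H^{-\frac{1}{2}}(\partial\Omega)}\le C\,\tau^{2(1-n)}\int_{\mathrm{supp}}\int_{\mathrm{supp}}|\eta-\xi|^{2-n}\,dS(\eta)\,dS(\xi),
\]
where both arguments range over the image under $\varphi$ of $B_{\tau}(x')$. Since the ambient distance dominates the horizontal one, $|\eta-\xi|\ge|\eta'-\xi'|$, and since $2-n<0$, while $dS(\xi)=q(\xi')\,d\xi'$ with $q=\mathcal{O}(1)$, I would dominate the surface integral by the flat integral $\int_{B_{\tau}(x')}\int_{B_{\tau}(x')}|\eta'-\xi'|^{2-n}\,d\eta'\,d\xi'$. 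Computing the inner integral via the elementary identity $\int_{|w|<R}|w|^{2-n}\,dw=CR$ in $\mathbb{R}^{n-1}$ (here with $R=2\tau$, using $|\eta'-\xi'|<2\tau$ on the support) gives a factor $C\tau$, and integrating the constant over the outer ball $B_{\tau}(x')$ of measure $C\tau^{n-1}$ contributes a further $\tau^{n-1}$, for a total of $C\tau^{n}$. Collecting the prefactor $\tau^{2(1-n)}$ with this $\tau^{n}$ produces exactly $\|\delta_{\tau}(\cdot,x)\|^{2}_{H^{-\frac12}(\partial\Omega)}\le C\tau^{2-n}$.

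The only delicate point is the justification of the kernel bound $N^{\Omega}_{0}(\eta,\xi)\le C|\eta-\xi|^{2-n}$ up to and along the boundary, with a constant depending only on the a-priori data. For a Lipschitz domain this is not entirely automatic and rests on known estimates for the Neumann function of the Laplacian, which will in any case be needed in sharper (asymptotic) form in the later sections dealing with $\mathcal{N}^{\Sigma}_{\sigma}$; for the present upper bound only the correct order of the diagonal singularity is required, together with the uniform boundedness of $q$ and of the correction term, so no finer structure is needed here. The remaining manipulations — reducing the surface integral to the flat one and the radial computation in $\mathbb{R}^{n-1}$ — are routine and dimension-driven, the integrability of $|w|^{2-n}$ over $\mathbb{R}^{n-1}$ being guaranteed precisely by $n-2<n-1$.
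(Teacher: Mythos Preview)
Your argument is correct and follows essentially the same route as the paper: insert $\delta_{\tau}$ into the quadratic form \eqref{norma H -1/2 bis}, invoke the diagonal bound $N^{\Omega}_{0}(\xi,\eta)\le C|\xi-\eta|^{2-n}$, pass to the flat variables via $|\xi-\eta|\ge|\xi'-\eta'|$ and $dS=q\,d\xi'$, and then extract the power $\tau^{2-n}$ from the resulting double integral. The only cosmetic difference is that the paper rescales by the change of variables $\zeta'=(\xi'-x')/\tau$, $\theta'=(\eta'-x')/\tau$ to reduce to a fixed integral over $B'_{1}\times B'_{1}$, whereas you bound $\delta_{\tau}$ pointwise by $C\tau^{1-n}$ and compute the radial integral $\int_{|w|<2\tau}|w|^{2-n}\,dw$ directly; both yield the same exponent.
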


\begin{proof}
By \eqref{norma H -1/2 bis} and the fact that $\delta_{\tau}$ is compactly supported on $\Sigma$, we have
\begin{eqnarray}\label{estimate delta}
||\delta_{\tau}(\cdot, x)||^{2}_{H^{-\frac{1}{2}}(\partial\Omega)} &=& \int_{\Sigma\times\Sigma} N^{\Omega}_{0}(\xi,\eta) \delta_{\tau}(\xi ,x) \delta_{\tau}(\eta ,x) \:dS(\xi)\:dS(\eta)\nonumber\\
& \leq & C \int_{B'_{\rho} \times B'_{\rho}}|\xi' - \eta'|^{2-n}\tau^{2(1-n)}\delta\left(\frac{\xi' - x'}{\tau}\right) \delta\left(\frac{\eta' - x'}{\tau}\right)\:d\xi' \:d\eta'.
\end{eqnarray}
The change of variables
\[\zeta'=\frac{\xi'- x'}{\tau}\qquad ; \qquad \theta'=\frac{\eta' - x'}{\tau},\]
together with the fact that $|\xi'-\eta'|=\tau|\zeta' - \theta'|$, leads to
\begin{eqnarray}\label{estimate delta final}
||\delta_{\tau}(\cdot, x)||^{2}_{H^{-\frac{1}{2}}(\partial\Omega)} & \leq & C \int_{B'_1 \times B'_1}\tau^{2-n}|\zeta' - \theta'|^{2-n}\tau^{2(1-n)}\delta\left(\zeta'\right) \delta\left(\theta'\right)\tau^{2(n-1)}\:d\zeta'\:d\theta'\nonumber\\
&=& C \int_{B'_1 \times B'_1}\tau^{2-n}|\zeta' - \theta'|^{2-n}\delta\left(\zeta'\right) \delta\left(\theta'\right)\:d\zeta'\:d\theta'\leq C\tau^{2-n}.
\end{eqnarray}

\end{proof}

We recall that for $\sigma(x)=\left\{\sigma_{ij}(x)\right\}_{i,j=1,\dots , n}$, $x\in\Omega$, symmetric, positive definite matrix-valued function satisfying \eqref{ellitticita'sigma}, we denote by $L$ the operator
\begin{equation}\label{operator L}
L=\mbox{div}\left(\sigma\nabla\cdot\right)
\end{equation}
nd that if in dimension $n>2$ we define the matrix
\begin{equation}\label{g}
g = \left(\det\sigma\right)^{\frac{1}{n-2}}\sigma^{-1},
\end{equation}
then 
\[ \frac{1}{\sqrt {det g}} L = \Delta_g, \]
on the open set $\Omega$ endowed with the Riemannian metric $g$,  see for instance \cite{B-G-M}, \cite{U}.  We emphasize that, being $n>2$, the knowledge of $\sigma$ is equivalent to the knowledge of $g$.

\section{Stability of the tangential part of $g$}\label{stability tangential g}
We start by observing that \eqref{g}, together with the uniform ellipticity assumption \eqref{ellitticita'sigma} on $\sigma$, implies the following uniform ellipticity of $g$
\begin{eqnarray}\label{ellitticita' g}
\lambda^{-\frac{2n-2}{n-2}}|\xi|^2\leq g(x)\xi\cdot\xi\leq \lambda^{\frac{2n-2}{n-2}},
& &for\:almost\:every\:x\in\Omega,\nonumber\\
& &for\:every\:\xi\in\mathbb{R}^{n},
\end{eqnarray}
where $\lambda>0$ has been introduced in \eqref{ellitticita'sigma}.

We also recall below few facts from \cite{Al-dH-G} about the Neumann kernel to make this manuscript self-contained. The Neumann kernel $N^{\Omega}_{\sigma}$ for the boundary value problem associated with the operator \eqref{operator L} and $\Omega$, for any $y\in\Omega$, $N^{\Omega}_{\sigma}(\cdot,y)$, is defined to be the distributional solution to
\begin{displaymath}\label{def neumann kernel}
\left\{ \begin{array}{ll}
L\:N^{\Omega}_{\sigma}(\cdot,y)=-\delta(\cdot -y), & \textnormal{in}\quad\Omega\\
\sigma\nabla N^{\Omega}_{\sigma}(\cdot, y)\cdot\nu= -\frac{1}{\vert\partial\Omega\vert},
& \textnormal{on}\quad{\partial\Omega}.
\end{array} \right.
\end{displaymath}
Note that $N^{\Omega}_{\sigma}$ is uniquely determined up to an additive constant. For simplicity we impose the normalization
\[\int_{\partial\Omega} N^{\Omega}_{\sigma}(\cdot,y)\:dS(\cdot)=0.\]
With this convention we obtain by Green's identities that
\begin{equation}\label{symmetry of N}
N^{\Omega}_{\sigma}(x,y) = N^{\Omega}_{\sigma}(y,x),\qquad\textnormal{for\:all}\quad x,y\in\Omega,\quad x\neq y.
\end{equation}
$N^{\Omega}_{\sigma}(x,y)$ extends continuously up to the boundary $\partial\Omega$ (provided that $x\neq y$) and in particular, when $y\in\partial\Omega$, it solves
\begin{displaymath}
\left\{ \begin{array}{ll}
L\:N^{\Omega}_{\sigma}(\cdot,y)=0, & \textnormal{in}\quad\Omega\\
\sigma\nabla N^{\Omega}_{\sigma}(\cdot, y)\cdot\nu= \delta(\cdot -y)-\frac{1}{\vert\partial\Omega\vert},
& \textnormal{on}\quad{\partial\Omega}.
\end{array} \right.
\end{displaymath}

\begin{theorem}\label{theorem neumann function holder}
Let $y$, $\rho$ and $\Sigma$ be defined by \eqref{definizione Sigma}. If $L$ is the operator \eqref{operator L}, with coefficients matrix $\sigma\in C^{\alpha}(B_{\rho}(y)\cap\overline\Omega)$, with $0<\alpha<1$, then the Neumann kernel $N^{\Omega}_{\sigma}(\cdot,y)$ satisfies
\begin{equation}\label{neumann kernel holder coefficients}
N^{\Omega}_{\sigma}(x,y)= 2\Gamma_{\sigma}(x,y)+\mathcal{O}(|x-y|^{2-n+\alpha}),
\end{equation}
as $x\rightarrow y$, $x\in\overline{\Omega}\setminus{\{y\}}$. Here
\begin{equation}\label{def Gamma}
\Gamma_{\sigma}(x,y):=C_{n}\:\Big(g(y)(x-y)\cdot(x-y)\Big)^{\frac{2-n}{2}}
\end{equation}
and $C_{n}=\frac{1}{n(n-2)\omega_n}$ with $\omega_n$ denoting the volume of the unit ball in $\mathbb{R}^n$.
\end{theorem}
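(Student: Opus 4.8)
The plan is to prove \eqref{neumann kernel holder coefficients} by a parametrix/comparison argument, after recognizing that $\Gamma_\sigma(\cdot,y)$ is exactly the fundamental solution of the \emph{frozen} operator $L_0=\mathrm{div}(\sigma(y)\nabla\,\cdot\,)$. Indeed, inserting $g(y)=(\det\sigma(y))^{\frac{1}{n-2}}\sigma(y)^{-1}$ from \eqref{g} into \eqref{def Gamma} and using $[(\det\sigma(y))^{\frac{1}{n-2}}]^{\frac{2-n}{2}}=(\det\sigma(y))^{-\frac12}$ gives $\Gamma_\sigma(x,y)=\frac{C_n}{\sqrt{\det\sigma(y)}}\,(\sigma(y)^{-1}(x-y)\cdot(x-y))^{\frac{2-n}{2}}$, which solves $L_0\Gamma_\sigma(\cdot,y)=-\delta_y$ in $\R^n$. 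Thus the assertion is that, at a boundary pole, the variable-coefficient Neumann kernel agrees to leading order with \emph{twice} the frozen fundamental solution, the correction being one Hölder order better. I would organize the argument in three stages: build a model that is exact for constant coefficients and a flat boundary, show that this model yields the factor $2\Gamma_\sigma$ at a boundary pole, and estimate the remainder generated by the oscillation of $\sigma$ and by the bending of $\partial\Omega$.

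After a rigid motion put $y=0$, and flatten the boundary by the $C^{2,\alpha}$ map $\Psi(x',x_n)=(x',x_n-\varphi(x'))$, which carries $\Omega\cap B_\rho$ onto a piece of $\{x_n>0\}$ and, since $\nabla\varphi(0)=0$, satisfies $\Psi(0)=0$ and $D\Psi(0)=I$; the conormal (Neumann) condition transforms into a conormal condition and $L$ becomes $\tilde L=\mathrm{div}(\tilde\sigma\nabla\,\cdot\,)$ with $\tilde\sigma\in C^\alpha$ and $\tilde\sigma(0)=\sigma(0)$. For the \emph{model} problem (frozen operator $L_0$, flat boundary) I would reduce $L_0$ to the Laplacian by the linear change $\hat x=\sigma(0)^{-1/2}x$: then $L_0$ becomes $\Delta$, $\Gamma_\sigma(\cdot,0)$ becomes the Newtonian potential $C_n|\hat x|^{2-n}$, the boundary becomes a hyperplane through the origin with Euclidean normal proportional to $\sigma(0)^{1/2}e_n$, and the conormal condition becomes the ordinary Neumann condition. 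The Neumann function of $\Delta$ in a half-space with pole \emph{on} the bounding hyperplane is, by even reflection across that hyperplane, exactly twice the Newtonian potential; transforming back, the flat frozen model kernel with boundary pole at $0$ equals $2\Gamma_\sigma(\cdot,0)$. (Equivalently, one sees the cancellation directly: since $\sigma(0)\nabla_x\Gamma_\sigma(x-q,0)\cdot e_n$ is proportional to $(x_n-q_n)$, an anisotropic image point $q^\ast=(q'+2q_nB^{-1}c,-q_n)$, with $B,c$ the tangential block and the tangential--normal coupling of $\sigma(0)^{-1}$, makes the conormal flux vanish on $\{x_n=0\}$ and merges with the source as $q\to0$.)

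For the remainder, set $R:=N^{\Omega}_{\sigma}(\cdot,0)-2\Gamma_\sigma(\cdot,0)$; the global flux term $-\tfrac{1}{|\partial\Omega|}$ and the mean-zero normalization perturb $N^\Omega_\sigma$ near $0$ only by a bounded amount, hence by $\mathcal O(|x|^{2-n+\alpha})$. Working in the flattened coordinates, $R$ solves an elliptic problem whose forcing is driven solely by the coefficient oscillation: in $\Omega$ one has $\tilde L R=-\mathrm{div}\big((\tilde\sigma-\tilde\sigma(0))\nabla(2\Gamma_\sigma)\big)$ with controlled Neumann data on the boundary, while the discrepancy between $\Gamma_\sigma$ in the original and flattened coordinates is $\mathcal O(|x|^{3-n})$ because $\Psi(x)-x=\mathcal O(|x'|^2)$ and is therefore of higher order than $\alpha$. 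Using $|\tilde\sigma(x)-\tilde\sigma(0)|\le C|x|^\alpha$ and $|\nabla\Gamma_\sigma|\le C|x|^{1-n}$, and representing $R$ through a Green's function of $\tilde L$ with the a priori bounds $G(x,z)\le C|x-z|^{2-n}$ and $|\nabla_z G(x,z)|\le C|x-z|^{1-n}$ (valid for $C^\alpha$ coefficients), the estimate reduces to the convolution bound $\int |x-z|^{1-n}|z|^{\alpha+1-n}\,dz\le C|x|^{2-n+\alpha}$, which holds by scaling precisely because $\alpha>0$. Equivalently one rescales $z=|x|\zeta$ and applies uniform interior and boundary Schauder estimates, the gain $|x|^\alpha$ coming from the Hölder seminorm of the rescaled coefficients. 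Either route gives $R(x)=\mathcal O(|x|^{2-n+\alpha})$, which is \eqref{neumann kernel holder coefficients}.

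The main obstacle is controlling the remainder uniformly \emph{up to} the boundary, and in particular as $x\to0$ along $\partial\Omega$, where the interior singularity of the source and the boundary interact: one needs the gradient bound for the Green's function of $\tilde L$ up to the boundary together with careful bookkeeping of the singular convolutions, so as to land exactly on the exponent $2-n+\alpha$ rather than lose a power. The other point requiring care is setting up the model so that the leading coefficient is genuinely $2$ and no spurious tangential-derivative term survives on the curved, anisotropic boundary; once the flat, constant-coefficient model is pinned down, the rest is a perturbation of it.
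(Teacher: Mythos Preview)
The paper does not supply its own proof of this theorem: it simply cites \cite{Al-dH-G}. So there is no argument here to compare yours against line by line. That said, your parametrix/comparison strategy---freeze the coefficients at the boundary pole, reduce to the Laplacian in a half-space by a linear change of variables to identify the leading term as $2\Gamma_\sigma$ via even reflection, and then estimate the remainder by representing it against a Green's function and invoking the scaling identity $\int |x-z|^{1-n}|z|^{\alpha+1-n}\,dz\le C|x|^{2-n+\alpha}$---is the standard route and is essentially how such asymptotics are obtained in \cite{Al-dH-G} and the surrounding literature. Your identification of the two delicate points (the factor $2$ at a curved anisotropic boundary, and the up-to-the-boundary gradient bound for the Green's function needed to close the convolution estimate) is accurate; with those in hand the sketch goes through. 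One small caveat: the global Green's function bounds you invoke require some care because $\partial\Omega$ is only Lipschitz away from $\Sigma$, so in practice one localizes to $B_\rho(y)$ (where both $\sigma$ and $\partial\Omega$ are $C^\alpha$ or better) and absorbs the far-field contribution into the bounded, hence $\mathcal O(|x-y|^{2-n+\alpha})$, part of the remainder.
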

\begin{proof}
See \cite{Al-dH-G} for a proof. 
\end{proof}

Therefore, we have

\begin{lemma}\label{lemma tau tangential}
Let $y$, $\rho$ and $\Sigma$ be defined by \eqref{definizione Sigma}. If $L$ is the operator \eqref{operator L}, with coefficients matrix $\sigma\in C^{\alpha}(B_{\rho}(y)\cap\overline\Omega)$, with $0<\alpha<1$, then the knowledge of $N^{\Omega}_{\sigma}(x, y)$, for every $x\in\partial\Omega\cap B_{\rho}(y)$ uniquely determines
\begin{equation}\label{tau boundary}
g_{(n-1)}(y)=\left\{g(y)v_i \cdot v_j\right\}_{i,j=1,\dots , (n-1)},
\end{equation}
where $v_1,\dots ,v_{n-1}$  is a basis for $T_{y} (\partial\Omega)$, the tangent space to $\partial\Omega$ at $y$.
\end{lemma}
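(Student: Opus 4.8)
The plan is to extract the tangential metric $g_{(n-1)}(y)$ from the leading singularity of $N^{\Omega}_{\sigma}(x,y)$ as $x\to y$ along the boundary, using the asymptotic expansion \eqref{neumann kernel holder coefficients}. By Theorem \ref{theorem neumann function holder}, for $x\in\partial\Omega\cap B_{\rho}(y)$ close to $y$ we have
\begin{equation*}
N^{\Omega}_{\sigma}(x,y) = 2 C_n\Big(g(y)(x-y)\cdot(x-y)\Big)^{\frac{2-n}{2}} + \mathcal{O}(|x-y|^{2-n+\alpha}),
\end{equation*}
so that the principal part already encodes the quadratic form $\zeta\mapsto g(y)\zeta\cdot\zeta$ evaluated on the displacement vector $x-y$. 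First I would fix a basis $v_1,\dots,v_{n-1}$ of the tangent space $T_y(\partial\Omega)$ and, for each fixed tangential direction, choose a sequence of boundary points $x$ approaching $y$ so that the displacement $x-y$ is asymptotically parallel to that tangent direction. Since $\Sigma$ is $C^{2,\alpha}$ with $\nabla\varphi(0)=0$, a point $x=(x',\varphi(x'))$ with $x'\to 0$ along a ray in direction $w'\in\mathbb{R}^{n-1}$ satisfies $x-y = (x', \varphi(x')) = (x',\mathcal{O}(|x'|^2))$, so $x-y$ differs from the tangential vector $(x',0)$ only by a higher-order normal correction; hence $g(y)(x-y)\cdot(x-y) = g(y)(x',0)\cdot(x',0) + \mathcal{O}(|x'|^3)$.

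Next I would isolate the quadratic form. Writing $t=|x-y|$ and letting $w=(w',0)$ be the unit tangent direction, the relation
\begin{equation*}
\lim_{t\to 0^+} t^{n-2}\, N^{\Omega}_{\sigma}(x,y) = 2C_n\Big(g(y)w\cdot w\Big)^{\frac{2-n}{2}}
\end{equation*}
holds, because the remainder term contributes $\mathcal{O}(t^{\alpha})$ after multiplication by $t^{n-2}$, and the error from replacing $x-y$ by its tangential projection is also lower order by the $C^{2,\alpha}$ estimate \eqref{norm phi}. Thus each tangential direction $w$ yields the number $g(y)w\cdot w$. Finally, polarization recovers the off-diagonal entries: evaluating the quadratic form on $v_i$, on $v_j$, and on $v_i+v_j$ gives $g(y)v_i\cdot v_j = \tfrac12\big(g(y)(v_i+v_j)\cdot(v_i+v_j) - g(y)v_i\cdot v_i - g(y)v_j\cdot v_j\big)$, which determines the full symmetric tangential matrix $g_{(n-1)}(y)$ in \eqref{tau boundary}.

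The main obstacle is the control of the two error sources so that the tangential quadratic form is genuinely isolated. One must verify that both the $\mathcal{O}(|x-y|^{2-n+\alpha})$ correction term in \eqref{neumann kernel holder coefficients} and the curvature-induced deviation of $x-y$ from its tangential projection vanish strictly faster than the $|x-y|^{2-n}$ leading order once the correct power of $t$ is factored out; this is exactly where the non-flatness and $C^{2,\alpha}$ regularity of $\Sigma$ enter, guaranteeing that the normal component of $x-y$ is $\mathcal{O}(|x'|^2)$ and hence negligible against the tangential component at leading order. I would emphasize that this lemma is purely a uniqueness (determination) statement, so no quantitative stability constant is needed here; the passage from this determination to the Hölder estimate of Theorem \ref{teorema principale} is carried out in the subsequent sections via sampling against the mollifiers $\delta_\tau$ constructed in Section \ref{mollifiers on graphs}.
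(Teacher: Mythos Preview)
Your argument is correct and is exactly the approach the paper has in mind: the paper does not prove this lemma in the text but cites \cite{Al-dH-G}, and the quantitative version carried out immediately afterwards in Proposition~\ref{lemma neumann map and K} (see \eqref{stima intermedia g}--\eqref{stima g tan final}) is precisely your limiting computation $r^{n-2}N^{\Omega}_{\sigma}(x,y)\to 2C_n\big(g(y)\xi'\cdot\xi'\big)^{(2-n)/2}$ followed by inversion of $s\mapsto s^{(2-n)/2}$ and polarization. One small correction: non-flatness plays no role in this lemma---only the $C^{2,\alpha}$ regularity (via $\varphi(0)=|\nabla\varphi(0)|=0$, giving $\varphi(x')=\mathcal{O}(|x'|^2)$) is used to make the normal component of $x-y$ negligible; the non-flatness condition \eqref{nu1}--\eqref{nu3} enters only later, in Section~\ref{stability full g}, to pass from the tangential restriction $g_{(n-1)}$ at several points to the full matrix $g$.
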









Such a uniqueness result obtained in \cite{Al-dH-G}, will guide us towards a stability estimate. 

In what follows, for $y\in\Sigma$, we set for $i=1,2$:
\begin{equation}\label{def Gamma i}
\Gamma_{i}(x,y):=\Gamma_{\sigma^{(i)}(y)}(x,y)=C_{n}\:\Big(g^{(i)}(y)(x-y)\cdot(x-y)\Big)^{\frac{2-n}{2}},
\end{equation}
\begin{equation}\label{def N i}
N_{i}(x,y):=N^{\Omega}_{{\sigma}_i}(x,y)
\end{equation}
\begin{equation}\label{operator L i}
L_i=\mbox{div}\left(\sigma^{(i)}\nabla\cdot\right)
\end{equation}
and
\begin{equation}\label{operator L i congelato 1}
L_{i;y}=\mbox{div}\left(\sigma^{(i)}(y)\nabla\cdot\right).
\end{equation}

\begin{lemma}\label{lemma stima gradiente N,K}
Under the same hypotheses of Theorem \ref{theorem neumann function holder}, for any $y\in\Sigma$ and any $x\in\overline{\Omega}\setminus \{y\}$  we have
\begin{eqnarray}
\left|\left(N_1 - N_2\right)(x,y)\right| &\leq & C E  |x-y|^{2-n},\label{stima N,K}\\
\left|\nabla_{x}(N_1 - N_2)(x,y)\right| &\leq & C E \left| x-y\right|^{1-n},\label{stima gradiente N,K}
\end{eqnarray}
where $C>0$ is a uniform constant.
\end{lemma}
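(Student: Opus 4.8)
The plan is to study the difference $w := (N_1 - N_2)(\cdot, y)$ directly, exploiting that it solves an inhomogeneous problem whose data carry the factor $E$. Subtracting the boundary value problems defining $N_1$ and $N_2$ and using $\mbox{div}(\sigma^{(2)}\nabla N_2)=0$, one checks that $w$ is the zero-average weak solution of
\[
\mbox{div}(\sigma^{(1)}\nabla w) = \mbox{div}\!\left((\sigma^{(2)}-\sigma^{(1)})\nabla N_2(\cdot,y)\right) \quad\mbox{in }\Omega,
\]
with Neumann condition $\sigma^{(1)}\nabla w\cdot\nu = (\sigma^{(2)}-\sigma^{(1)})\nabla N_2(\cdot,y)\cdot\nu$ on $\partial\Omega$; equivalently, setting $F:=(\sigma^{(2)}-\sigma^{(1)})\nabla N_2(\cdot,y)$, one has $\int_\Omega \sigma^{(1)}\nabla w\cdot\nabla\phi = \int_\Omega F\cdot\nabla\phi$ for every test field $\phi$. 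Applying Green's second identity for $L_1$ with the Neumann kernel $N_1(\cdot,x)$, integrating by parts and using that the boundary terms cancel and that $w$ has zero average on $\partial\Omega$, I obtain the representation
\[
w(x) = \int_\Omega \nabla_z N_1(z,x)\cdot\left(\sigma^{(2)}(z)-\sigma^{(1)}(z)\right)\nabla_z N_2(z,y)\,dz .
\]

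For \eqref{stima N,K} I would split this integral into the contribution from $B_{\rho}(y)\cap\Omega$ and from $\Omega\setminus B_{\rho}(y)$. On the first piece the coefficients are regular, so Theorem \ref{theorem neumann function holder} together with standard interior and boundary Schauder estimates yield the pointwise bounds $|\nabla_z N_1(z,x)|\le C|z-x|^{1-n}$ and $|\nabla_z N_2(z,y)|\le C|z-y|^{1-n}$; bounding $|\sigma^{(2)}-\sigma^{(1)}|\le E$ then reduces the estimate to the Riesz-type convolution inequality
\[
\int_\Omega |z-x|^{1-n}|z-y|^{1-n}\,dz \le C|x-y|^{2-n},
\]
valid for $n\ge 3$ (here the exponents $n-1$ are each below $n$ while their sum exceeds $n$), which produces the bound $C\,E\,|x-y|^{2-n}$. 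The second piece is away from both poles and is controlled by a Cauchy--Schwarz/energy estimate of size $C\,E$, which is harmless since there $|x-y|^{2-n}$ is bounded below; this establishes \eqref{stima N,K}. I emphasize that this first estimate is robust, needing only the $L^\infty$ smallness of $\sigma^{(2)}-\sigma^{(1)}$.

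The gradient bound \eqref{stima gradiente N,K} is the delicate step, and where I expect the main difficulty to lie: differentiating the representation in $x$ produces the kernel $\nabla_x\nabla_z N_1(z,x)=O(|z-x|^{-n})$, which is not absolutely integrable near $z=x$, so one cannot differentiate under the integral sign. Instead I would argue locally. Fix $x$, put $d:=|x-y|$, and work in $B_{d/2}(x)\cap\overline\Omega$, where $|z-y|\ge d/2$; there $F$ obeys $\|F\|_{L^\infty}\le C\,E\,d^{1-n}$ and, using that $\sigma^{(i)}$ is constant near $\Sigma$ so that the Hölder seminorm of $\sigma^{(2)}-\sigma^{(1)}$ vanishes and $N_2(\cdot,y)$ is $L_2$-harmonic off its pole, $[F]_{C^\alpha}\le C\,E\,d^{1-n-\alpha}$. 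Rescaling to unit scale via $\tilde w(\zeta):=w(x+d\zeta)$, these bounds together with the already proved $\sup_{B_{d/2}(x)}|w|\le C\,E\,d^{2-n}$ become uniform bounds of size $E\,d^{2-n}$ for $\tilde w$ and for the rescaled source. An interior gradient Schauder estimate — respectively its boundary counterpart, which uses the $C^{2,\alpha}$ regularity of $\Sigma$ and the Neumann datum $F\cdot\nu$ when $x\in\partial\Omega$ — then gives $\|\nabla\tilde w\|_{L^\infty}\le C\,E\,d^{2-n}$, and undoing the scaling yields $|\nabla_x w(x)| = d^{-1}|\nabla_\zeta\tilde w(0)| \le C\,E\,d^{1-n}$, which is \eqref{stima gradiente N,K}. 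The crux is precisely the Hölder control of $F$ with the sharp power of $d$ and with the factor $E$ (hence the role of the structural constancy of the conductivities near $\Sigma$) and, near the boundary, the compatibility of the boundary Schauder estimate with the Neumann data; by contrast \eqref{stima N,K} is essentially bookkeeping once the representation formula and the convolution inequality are in hand.
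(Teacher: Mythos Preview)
Your argument is correct and reaches the same conclusions, but the route differs from the paper's in two respects worth recording. For \eqref{stima N,K} the paper does not work with $w=N_1-N_2$ directly: it first decomposes $N_i=2\Gamma_i+R_i$ via Theorem~\ref{theorem neumann function holder}, derives a weak equation for $R_1-R_2$ whose data (called $G$, $F$, $f$) all carry the factor $E$, tests it against $N_1(\cdot,z)$ (justifying the singular pairing through the truncations $\eta_k=\min\{N_1(\cdot,z),k\}$), and obtains the sharper intermediate bound $|(R_1-R_2)(z,y)|\le CE|z-y|^{2-n+\alpha}$ before adding back the elementary estimate for $\Gamma_1-\Gamma_2$. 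Your direct representation $w(x)=\int_\Omega(\sigma^{(2)}-\sigma^{(1)})\nabla N_2(\cdot,y)\cdot\nabla N_1(\cdot,x)$ together with the Riesz convolution bound is more streamlined and bypasses the decomposition, at the minor cost of losing the extra $\alpha$ and of glossing over the same approximation issue: neither $N_1(\cdot,x)$ nor $w$ lies in $H^1(\Omega)$, so the pairing has to be justified by truncation near the two poles, exactly as the paper does. For \eqref{stima gradiente N,K} the paper passes to the non-divergence form $\mbox{tr}\big(\sigma^{(1)}D^2(N_1-N_2)\big)=\mbox{tr}\big((\sigma^{(2)}-\sigma^{(1)})D^2 N_2\big)$ (using the constancy of the $\sigma^{(i)}$ near $\Sigma$), applies $C^{2,\alpha}$ Schauder on dyadic half-annuli $(B_{2r}(y)\setminus\overline{B_r(y)})\cap\Omega$, and then interpolates with \eqref{stima N,K}; your rescaled $C^{1,\alpha}$ Schauder argument on $B_{d/2}(x)\cap\overline\Omega$ is an equivalent, arguably more transparent, way to extract the gradient bound. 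Note that both proofs tacitly invoke the constancy hypothesis (stronger than the $C^\alpha$ regularity stated in Theorem~\ref{theorem neumann function holder}) precisely at the gradient step.
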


\begin{proof}
From Theorem \ref{theorem neumann function holder} for any $y\in\Sigma$, for any $x\in\Omega$, for $i=1,2$, we have
\begin{equation}\label{N i}
N_i(x,y)=2\Gamma_i(x,y)+R_i(x,y),
\end{equation}
with
\begin{eqnarray}
|R_i(x,y)|\leq C |x-y|^{2-n+\alpha},\\\label{stima R}
|\nabla_x R_i(x,y)|\leq C |x-y|^{1-n+\alpha}.\label{stima grad R}
\end{eqnarray}
Recalling that for $y\in\Sigma$, $N_i(\cdot,y)$ is the distributional solution to 
\begin{displaymath}\label{neumann kernel boundary bis}
\left\{ \begin{array}{ll}
L_i\:N_{i}(\cdot,y)=0, & \textnormal{in}\quad\Omega\\
\sigma\nabla N_i(\cdot, y)\cdot\nu= \delta(\cdot -y)-\frac{1}{\vert\partial\Omega\vert},
& \textnormal{on}\quad{\partial\Omega}.
\end{array} \right.
\end{displaymath}
for any $\eta\in H^{1}(\Omega)\cap C^{1}(\overline{\Omega}\cap{B_{\rho}(y)})$, we have
\begin{eqnarray}\label{int 1}
\int_{\Omega} \sigma^{(i)}(x)\nabla_{x} N_i (x,y)\cdot\nabla_x\eta(x)dx & = & \int_{\partial\Omega} \eta(x)\left(\delta(x-y)-\frac{1}{|\partial\Omega|}\right)dS(x)\nonumber\\
& = & \eta(y)-\bar{\eta},
\end{eqnarray}
where $\bar{\eta}=\frac{1}{|\partial\Omega|}\int_{\partial\Omega} \eta(x) dS(x)$. Recalling the decomposition \eqref{N i} we obtain
\begin{equation}\label{int 2}
\int_{\Omega} \sigma^{(i)}(x)\nabla_{x} 2\Gamma_i (x,y)\cdot\nabla_x\eta(x)dx + \int_{\Omega} \sigma^{(i)}(x)\nabla_{x} R_i (x,y)\cdot\nabla_x\eta(x)dx = \eta(y)-\bar{\eta},
\end{equation}
which leads to
\begin{eqnarray}\label{int 3}
\int_{\Omega} \sigma^{(i)}(y)\nabla_{x} 2\Gamma_i (x,y)\cdot\nabla_x\eta(x)dx  &+&  \int_{\Omega}\left( \sigma^{(i)}(x) - \sigma^{(i)}(y)\right)\nabla_{x} 2\Gamma_i (x,y)\cdot\nabla_x\eta(x)dx\nonumber\\
&+&  \int_{\Omega}\sigma^{(i)}(x)\nabla_{x} R_i (x,y)\cdot\nabla_x\eta(x)dx= \eta(y)-\bar{\eta}.
\end{eqnarray}
Noticing that $2\Gamma_i(\cdot , y)$ solves the boundary value problem
\begin{displaymath}\label{neumann kernel boundary}
\left\{ \begin{array}{ll}
L_{i;y}\:2\Gamma_{i}(\cdot , y)=0, & \textnormal{in}\quad\Omega\\
\sigma^{(i)}(y) \nabla \left(2\Gamma_i(\cdot ,y)\right)\cdot\nu= \delta(\cdot -y)+f_i(\cdot,y),
& \textnormal{on}\quad{\partial\Omega}.
\end{array} \right.
\end{displaymath}
with
\begin{equation}\label{stima f i}
|f_i(x,y)|\leq C |x-y|^{1-n+\alpha},
\end{equation}
where $C>0$ is a uniform constant and  $L_{i;y}$ has been defined in \eqref{operator L i congelato 1}, therefore
\begin{equation}\label{int 4}
\int_{\Omega} \sigma^{(i)}(y)\nabla_{x} 2\Gamma_i (x,y)\cdot\nabla_x\eta(x)dx = \eta(y) + \int_{\partial\Omega} f_i (x,y)\eta(x) dS(x).
\end{equation}
and defining
\begin{equation}
F_i(x,y):= \left( \sigma^{(i)}(x)- \sigma^{(i)}(y)\right)\nabla_{x} 2\Gamma_i (x,y),
\end{equation}
we can rewrite \eqref{int 3} as
\begin{eqnarray}\label{int 5}
 \int_{\Omega} \sigma^{(i)}(x)\nabla_{x} R_i (x,y)\cdot\nabla_x\eta(x)dx  & = & -  \int_{\Omega}F_i (x,y)\cdot\nabla_x\eta(x)dx\nonumber\\ & - & \int_{\partial\Omega} f_i (x,y)\eta(x) dS(x) -\bar{\eta},
\end{eqnarray}
where $F_i(x,y)$ is zero for $x\in B_{\rho}(y)$, therefore bounded for $x\in\Omega$. Therefore by combining \eqref{int 4} and \eqref{int 5}, we have that  for any $\eta\in H^1(\Omega)\cap C^1(\overline{B_{\rho}(y)})$ 
\begin{eqnarray}\label{int 6}
& & \int_{\Omega} \sigma^{(1)}(x)\nabla_{x} \left(R_1 - R_2\right)(x,y)\cdot\nabla_x\eta(x)dx \nonumber\\
&  & = \int_{\Omega} \left(\sigma^{(2)} - \sigma^{(1)}\right)(x) \nabla_x R_2(x,y)\cdot\nabla_x\eta(x)dx\nonumber\\
& & -  \int_{\Omega}\left(F_1 - F_2\right)(x,y)\cdot\nabla_x\eta(x)dx  -  \int_{\partial\Omega}\left(f_1 - f_2\right) (x,y)\eta(x) dS(x).
\end{eqnarray}
By denoting for any $x\in\Omega$, for any $y\in\Sigma$,
\[F(x,y):= \left(F_1 - F_2\right)(x,y)\quad ; \quad f(x,y):=\left(f_1 - f_2\right)(x,y),\]
we have
\begin{eqnarray}
& & |F(x,y)|\leq C E,\label{stima F}\\
& & |f(x,y)|\leq C E |x-y|^{1-n+\alpha},\label{stima f}
\end{eqnarray}
and by denoting also
\[G(x,y):=  \left(\sigma^{(2)} - \sigma^{(1)}\right)(x) \nabla_x R_2(x,y) - F(x,y), \]
we have
\begin{equation}\label{stima G}
|G(x,y)|\leq C E |x-y|^{1-n+\alpha},
\end{equation}
where $C>0$ denotes a uniform constant.\\
For any $z\in\overline{\Omega}\setminus \{y\}$ and any $k\in \mathbb{N}$, we define $\eta_k(x)=\min\{N_1(x,z), k\}$ with $x\in\Omega$.  By choosing $\eta(x)=\eta_k(x)$ in  \eqref{int 6} and  by the dominated convergence theorem we obtain
\begin{eqnarray}\label{int 6 bis}
 \int_{\Omega} \sigma^{(1)}(x)\nabla_{x} \left(R_1 - R_2\right)(x,y)\cdot\nabla_x N_1(x,z)dx
= \int_{\Omega} G(x,y)\cdot \nabla_x N_1(x,z)dx  -  \int_{\partial\Omega} f(x,y)N_1(x,z) dS(x).
\end{eqnarray}

By performing integration by parts on the integral appearing on the left hand side of \eqref{int 6} we have
\begin{eqnarray}\label{int 7}
&  &  \frac{1}{|\partial\Omega|}\int_{\partial\Omega} \left(R_1 - R_2\right)(x,y)dS(x) + \left(R_1 - R_2\right)(z,y)\nonumber\\
& & = \int_{\Omega} G(x,y) \cdot \nabla_xN_1(x,z)dx  -  \int_{\partial\Omega} f(x,y)N_1(x,z) dS(x),
\end{eqnarray}
where  the right hand side of equality \eqref{int 7} can be estimated as follows
\begin{eqnarray}\label{int 8}
& &\left| \int_{\Omega} G(x,y) \cdot \nabla_xN_1(x,z)dx  -  \int_{\partial\Omega} f(x,y)N_1(x,z) dS(x)\right|\nonumber\\
& & C ||\sigma^{(1)}-\sigma^{(2)}||_{L^{\infty}(\Omega)}\bigg\{\int_{\Omega} |x-y|^{1-n+\alpha}\:|x-z|^{1-n} dx\nonumber\\
& & + \int_{\partial\Omega} |x-y|^{1-n+\alpha}\:|x-z|^{2-n} dS(x)\bigg\}\nonumber\\
& & \leq C E |z-y|^{2-n+\alpha},
\end{eqnarray}
where $C>0$ is a uniform constant. To estimate $\int_{\partial\Omega}\left(R_1 - R_2\right)(x,y)dS(x)$ in \eqref{int 6} recall that $N_i(\cdot, y)$ is uniquely determined by imposing the condition
\begin{equation}\label{N i unico}
\int_{\partial\Omega} N_i (x,y) dS(x) = 0,
\end{equation}
which leads to
\begin{equation}\label{R i}
\int_{\partial\Omega} R_i (x,y) dS(x) = -\int_{\partial\Omega} 2\Gamma_i (x,y) dS(x).
\end{equation}
Therefore
\begin{eqnarray}\label{int 9}
\left| \int_{\partial\Omega} \left(R_1 - R_2\right)(x,y)dS(x)\right| & = & \left|\int_{\partial\Omega} 2\left(\Gamma_1 - \Gamma_2\right)(x,y)dS(x)\right| \le C  E,
\end{eqnarray}
where $C>0$ is a uniform constant, which leads to
\begin{eqnarray}\label{estimate R1-R2}
\left|\left(R_1 - R_2\right)(z,y)\right| & \leq & C E \left\{1+  |z-y|^{2-n+\alpha}\right\} \leq  C E |z-y|^{2-n+\alpha},
\end{eqnarray}
where $C>0$ is a uniform constant. \eqref{stima N,K} follows from \eqref{estimate R1-R2}. To prove \eqref{stima gradiente N,K} we observe that $\sigma^{(i)}$ is constant on $\overline{\Omega}\cap B_{\rho}(y)$, for $i=1,2$, therefore
\begin{equation}\label{div traccia}
\mbox{div}\left(\sigma^{(i)}\nabla N_i(\cdot,y)\right) = \mbox{tr}\left(\sigma^{(i)}D^2 N_i (\cdot, y)\right)=0,\qquad\textnormal{for}\quad i=1,2,
\end{equation}
therefore
\begin{equation}\label{traccia traccia}
\mbox{tr}\left(\sigma^{(1)}D^2 \left(N_1 - N_2\right) (\cdot, y)\right) = \mbox{tr}\left((\sigma^{(2)} - \sigma^{(1)})D^2 N_2 (\cdot, y)\right).
\end{equation}
By fixing $r>0$, with $r < \frac{\rho}{4}$ and defining 
\begin{eqnarray*}
& &C_r^{+}:=\left(B_{2r}(y)\setminus\overline{B_r(y)}\right)\cap\Omega\\
& &C_{2r}^{+}:=\left(B_{4r}(y)\setminus\overline{B_{\frac{r}{2}}(y)}\right)\cap\Omega,
\end{eqnarray*}
we have for $i=1,2$
\begin{equation}\label{stima D2 N}
||D^2 N_i||_{C^{\alpha}(C_r^{+})}\leq \frac{C}{r^2} ||N_i||_{C^{\alpha}(C_{2r}^{+})}\leq C r^{-n-\alpha}
\end{equation}
and
\begin{equation}\label{stima D2 N1-N2}
||D^2 (N_1 - N_2)||_{C^{\alpha}(C_r^{+})}\leq  C  E r^{-n-\alpha}.
\end{equation}
\eqref{stima gradiente N,K} follows by \eqref{stima N,K}, \eqref{stima D2 N1-N2} and an interpolation argument.

\end{proof}




\begin{proposition}[Stability of the tangential part of $g$]\label{lemma neumann map and K}

For any $y\in\Sigma$, 
\begin{equation}\label{stima puntuale g tangenziale}
\left|\left| g^{(1; n-1)}(y) - g^{(2; n-1)}(y)\right|\right|_{\mathcal{L}(\mathbb{R}^{n-1},\  \mathbb{R}^{n-1} )} \leq C E^{1-\beta} ||\mathcal{N}^{\Sigma}_{\sigma_1} - \mathcal{N}^{\Sigma}
_{\sigma_2}||^{\beta}_{\mathcal{L}\left(_{0}H^{-\frac{1}{2}}(\partial\Omega), _{0}H^{\frac{1}{2}}(\partial\Omega)\right)},
\end{equation}
where $C$ is a positive uniform constant, $\beta=\frac{1}{n-1}$, $g^{(i; n-1)}(y)$ is the $(n-1)\times (n-1)$ upper left sub-matrix of $g^{(i)}(y)$, for $i=1,2$.
\end{proposition}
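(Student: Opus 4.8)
The plan is to probe the leading boundary singularities of the two Neumann kernels by testing the operator $\mathcal{N}^{\Sigma}_{\sigma_1}-\mathcal{N}^{\Sigma}_{\sigma_2}$ against zero-average combinations of the mollifiers built in Section~\ref{mollifiers on graphs}, and then to read off the tangential quadratic form from the coefficient of the singularity. Write $D$ for the operator norm $\|\mathcal{N}^{\Sigma}_{\sigma_1}-\mathcal{N}^{\Sigma}_{\sigma_2}\|_{\mathcal{L}({}_{0}H^{-\frac12}(\partial\Omega),{}_{0}H^{\frac12}(\partial\Omega))}$. Since $g^{(i)}=(\det\sigma^{(i)})^{\frac{1}{n-2}}(\sigma^{(i)})^{-1}$ is a locally Lipschitz function of $\sigma^{(i)}$ on the ellipticity class, one has $\|g^{(1)}(y)-g^{(2)}(y)\|\le CE$, so \eqref{stima puntuale g tangenziale} is trivial whenever $D\ge c_0E$; hence I may assume $D\ll E$ from now on.

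Fix a unit vector $\omega'\in\mathbb{R}^{n-1}$ and a scale $t>0$ to be chosen, and let $x\in\Sigma$ be the boundary point with $x'=t\omega'$, so that $|x-y|=t\,(1+\mathcal{O}(t))$ and, because $\nabla\varphi(0)=0$, the unit vector $\omega=(x-y)/|x-y|$ is tangential up to a normal component of size $\mathcal{O}(t)$. Pick two reference points $p\neq q$ in $\Sigma$ whose mutual distance and distances from $x$ and $y$ are all comparable to $\rho$, and set $\psi_1=\delta_\tau(\cdot,x)-\delta_\tau(\cdot,p)$ and $\psi_2=\delta_\tau(\cdot,y)-\delta_\tau(\cdot,q)$. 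By \eqref{integrale delta 1} these have zero average, hence lie in ${}_{0}H^{-\frac12}(\Sigma)$, and by \eqref{stima H -1/2 delta} and the triangle inequality $\|\psi_j\|_{H^{-\frac12}(\partial\Omega)}\le C\tau^{\frac{2-n}{2}}$. The definition of the operator norm then gives $|\langle\psi_1,(\mathcal{N}^{\Sigma}_{\sigma_1}-\mathcal{N}^{\Sigma}_{\sigma_2})\psi_2\rangle|\le D\,\|\psi_1\|_{H^{-\frac12}}\|\psi_2\|_{H^{-\frac12}}\le CD\tau^{2-n}$. On the other hand, representing $\mathcal{N}^{\Sigma}_{\sigma_i}$ through its integral kernel (which by \cite{Al-dH-G} differs from $N_i$ only by a bounded term, the two bounded corrections differing by at most $CE$ via the perturbation argument of Lemma~\ref{lemma stima gradiente N,K}, equivalently because $\mathcal{N}^{\Sigma}_{\sigma_1}-\mathcal{N}^{\Sigma}_{\sigma_2}$ is governed by $\sigma^{(1)}-\sigma^{(2)}$ through \eqref{Alessandrini identity local N-D}) and expanding $\psi_1(\xi)\psi_2(\eta)$ into four products of mollifiers, each of the four pairs of centres $(x,y),(x,q),(p,y),(p,q)$ is separated by at least $\min\{t,\rho\}\gg\tau$; thus no mollifier supports overlap and each double integral evaluates the corresponding off-diagonal kernel value up to a mollification error. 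Using $N_i=2\Gamma_i+R_i$, the bound \eqref{stima N,K}, the gradient bound \eqref{stima gradiente N,K} for the mollification error, and \eqref{estimate R1-R2} for $R_1-R_2$, the three ``far'' contributions at $(x,q),(p,y),(p,q)$ together with the bounded corrections are of size $CE$, and one is left with
\[
\big|\langle\psi_1,(\mathcal{N}^{\Sigma}_{\sigma_1}-\mathcal{N}^{\Sigma}_{\sigma_2})\psi_2\rangle-2(\Gamma_1-\Gamma_2)(x,y)\big|\le C\big(E+Et^{2-n+\alpha}+E\tau t^{1-n}\big).
\]

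Combining the two estimates yields $|2(\Gamma_1-\Gamma_2)(x,y)|\le C(D\tau^{2-n}+E+Et^{2-n+\alpha}+E\tau t^{1-n})$. By \eqref{def Gamma i} one has $2(\Gamma_1-\Gamma_2)(x,y)=2C_n|x-y|^{2-n}\big(a_1^{\frac{2-n}{2}}-a_2^{\frac{2-n}{2}}\big)$ with $a_i=g^{(i)}(y)\omega\cdot\omega$, the geometric factor $|x-y|^{2-n}=t^{2-n}(1+\mathcal{O}(t))$ being the same for $i=1,2$. The ellipticity \eqref{ellitticita' g} confines each $a_i$ to a fixed compact subinterval of $(0,\infty)$ on which $s\mapsto s^{\frac{2-n}{2}}$ has derivative bounded away from zero, so $|a_1-a_2|\le Ct^{n-2}|2(\Gamma_1-\Gamma_2)(x,y)|$. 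Finally $(g^{(1;n-1)}(y)-g^{(2;n-1)}(y))\omega'\cdot\omega'=(a_1-a_2)+r$, where $|r|\le CEt$ accounts for the $\mathcal{O}(t)$ normal part of $\omega$ and the bound $\|g^{(1)}(y)-g^{(2)}(y)\|\le CE$. Altogether, for every unit $\omega'$,
\[
\big|(g^{(1;n-1)}(y)-g^{(2;n-1)}(y))\omega'\cdot\omega'\big|\le C\big(t^{n-2}D\tau^{2-n}+Et^{n-2}+Et^{\alpha}+E\tau t^{-1}+Et\big).
\]

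It remains to optimize in the two scales. For fixed $t$ the only $\tau$-dependent terms are $t^{n-2}D\tau^{2-n}$ and $E\tau t^{-1}$; their sum is minimized at $\tau\simeq(D/E)^{\frac{1}{n-1}}t$, where both equal $CE^{1-\beta}D^{\beta}$ with $\beta=\frac{1}{n-1}$, and this value is independent of $t$. The remaining terms $Et^{n-2},Et^{\alpha},Et$ decrease with $t$, so taking $t=(D/E)^{\beta/\alpha}$---which, since $D\ll E$, keeps $\tau\le t/2$ and $t,\tau<\rho$---makes each of them $\le CE^{1-\beta}D^{\beta}$ as well. Hence $|(g^{(1;n-1)}(y)-g^{(2;n-1)}(y))\omega'\cdot\omega'|\le CE^{1-\beta}D^{\beta}$ for every unit $\omega'$, and since the operator norm of a symmetric matrix equals the supremum of its quadratic form over the unit sphere, taking the supremum over $\omega'$ gives \eqref{stima puntuale g tangenziale}. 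The crux of the argument is exactly this bookkeeping: the distinct reference points $p\neq q$, being far from $x$ and $y$, render $\psi_1,\psi_2$ zero-average while preventing any diagonal self-energy of size $E\tau^{2-n}$---which, as $D\ll E$, would otherwise dominate the data term $D\tau^{2-n}$---and one must check that every error carries either a power gain in $t$ or the factor $D\tau^{2-n}$, so that the two-scale choice isolates the sharp Hölder exponent $\beta$.
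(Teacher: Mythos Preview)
Your proof is correct and follows essentially the same route as the paper: test $\mathcal{N}^{\Sigma}_{\sigma_1}-\mathcal{N}^{\Sigma}_{\sigma_2}$ against zero-average pairs of boundary mollifiers, use the $H^{-1/2}$ bound \eqref{stima H -1/2 delta} together with the pointwise and gradient estimates of Lemma~\ref{lemma stima gradiente N,K}, and extract the tangential quadratic form from the leading singularity.

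The only genuine difference is organizational. The paper sets $\tau=h|x-y|$ and optimizes the single parameter $h$, obtaining the \emph{uniform} bound $|x-y|^{n-2}|(K_1-K_2)(x,y,w,z)|\le C\varepsilon^{\beta}E^{1-\beta}$; only then does it send $x\to y$ along a tangential direction, so the H\"older remainder $R_1-R_2$, the far contributions at $w,z$, and the normal defect of $(x-y)/|x-y|$ all disappear in the limit and never have to be balanced. You instead keep $t=|x-y|$ and $\tau$ as two independent scales, carry the extra errors $Et^{\alpha}$, $Et^{n-2}$, $Et$ explicitly, and optimize both at once; this works because, once $\tau\simeq (D/E)^{\beta}t$ has equalized the two principal terms, any choice $t\le (D/E)^{\beta/\alpha}$ makes the residual powers of $t$ subordinate. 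The paper's limit argument is a little cleaner; your two-scale optimization is more quantitative but arrives at the same exponent.

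One minor simplification: since $\psi_1,\psi_2$ have zero $\partial\Omega$-average, the pairing is represented \emph{exactly} by $\iint_{\Sigma\times\Sigma}(N_1-N_2)\psi_1\psi_2$, as in \eqref{uguaglianza mappa nucleo}; there is no need to invoke a bounded correction to the kernel or to control its difference by $CE$.
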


\begin{proof}
Let $d>0$ be such that $d<(1+M)\rho$.
Given distinct points $x,y,w,z\in\Sigma$, we recall from \cite{Al-dH-G} the following definition
\begin{equation}\label{def K}
K_{\sigma}(x,y,w,z)=N_{\sigma}(x,y)-N_{\sigma}(x,w)-N_{\sigma}(z,y)+N_{\sigma}(z,w),
\end{equation}
We also recall that knowing $\mathcal{N}^{\Sigma}_{\sigma}$ is equivalent to knowing $K_{\sigma}$, for any $x,y,w,z\in\Sigma$ \cite[Lemma 3.8]{Al-dH-G}.

We also note that, fixing $w,z\in\Sigma$, $K_{\sigma}$, as a function of $x$, $y$, has the same asymptotic behaviour of $N_{\sigma}(x,y)$ as $x\rightarrow y$.

Given $y\in\Sigma$, we choose $x,w,z\in\Sigma$ such that
\begin{eqnarray}
|x-y| &\leq & \frac{d}{4}\label{distanza x y}\\
|x-w| &\geq &\frac{d}{4}\label{distanza x w}\\
|x-z| &\geq & \frac{d}{4}\label{distanza x z}\\
|w-z| &\geq & \frac{d}{4}.\label{distanza w z}
\end{eqnarray}
Let 
\begin{equation}\label{tau}
\tau := h|x-y|,\qquad\textnormal{with}\quad 0<h<\frac{1}{16}
\end{equation}
and let $\delta_{\tau}(\cdot ,\cdot)$ be the approximate Dirac's delta functions on $\Sigma$ introduced in section \ref{mollifiers on graphs}, centered on the second argument. Then we have
\begin{eqnarray}\label{uguaglianza mappa nucleo}
&&\quad\quad  \langle\delta_{\tau}(\cdot\:,x) - \delta_{\tau}(\cdot\:,z), \left(\mathcal{N}^{\Sigma}_{\sigma_1} - \mathcal{N}^{\Sigma}_{\sigma_2}\right)\left(\delta_{\tau}(\cdot\:,y) - \delta_{\tau}(\cdot\:,w)\right)\rangle \\
&&=\int_{\Sigma\times \Sigma}\left(N_1 - N_2\right) (\xi,\eta)\left(\delta_{\tau}(\xi\:,x) - \delta_{\tau}(\xi\:,z)\right)\left(\delta_{\tau}(\eta\:,y) - \delta_{\tau}(\eta\:,w)\right)dS(\xi)dS(\eta)\nonumber 
\end{eqnarray}
The integral appearing on the right hand side of \eqref{uguaglianza mappa nucleo} depends on $x,y,w,z$ and to estimate how close this quantity is to $(K_1-K_2)(x,y,w,z)$ we form
\begin{eqnarray}\label{int K 1}
&&\quad \quad \left(K_1-K_2\right)(x,y,w,z) \\
&&- \int_{\Sigma\times \Sigma}\left(N_1 - N_2\right) (\xi,\eta)\left(\delta_{\tau}(\xi,x) - \delta_{\tau}(\xi,z)\right)\left(\delta_{\tau}(\eta,y) - \delta_{\tau}(\eta,w)\right)dS(\xi)dS(\eta)\nonumber\\
&&=\left(N_1-N_2\right)(x,y) - \int_{\Sigma\times \Sigma}\left(N_1 - N_2\right) (\xi,\eta)\delta_{\tau}(\xi,x)\delta_{\tau}(\eta,y)dS(\xi)dS(\eta)\nonumber\\
&&- \left(N_1-N_2\right)(x,w)- \int_{\Sigma\times \Sigma}\left(N_1 - N_2\right) (\xi,\eta)\delta_{\tau}(\xi,x)\delta_{\tau}(\eta,w)dS(\xi)dS(\eta)\nonumber\\
&&+\left(N_1-N_2\right)(z,w) - \int_{\Sigma\times \Sigma}\left(N_1 - N_2\right) (\xi,\eta)\delta_{\tau}(\xi,z)\delta_{\tau}(\eta,w)dS(\xi)dS(\eta)\nonumber\\
&&-\left(N_1-N_2\right)(z,y)- \int_{\Sigma\times \Sigma}\left(N_1 - N_2\right) (\xi,\eta)\delta_{\tau}(\xi,z)\delta_{\tau}(\eta,y)dS(\xi)dS(\eta).\nonumber
\end{eqnarray}
We estimate each term on the right hand side of \eqref{int K 1} as follows
\begin{eqnarray}\label{term 1}
&&\quad \quad \left|\left(N_1-N_2\right)(x,y) - \int_{\Sigma\times \Sigma}\left(N_1 - N_2\right) (\xi,\eta)\delta_{\tau}(\xi,x)\delta_{\tau}(\eta,y)dS(\xi)dS(\eta)\right|\\
&&=\left|\int_{\Sigma\times \Sigma}\Big(\left(N_1-N_2\right)(x,y) - \left(N_1 - N_2\right)(\xi,\eta)\Big)\delta_{\tau}(\xi,x)\delta_{\tau}(\eta,y)dS(\xi)dS(\eta)\right| \nonumber\\
&&\leq\int_{\Sigma\times \Sigma}\Big|\left(N_1-N_2\right)(x,y) - \left(N_1-N_2\right)(\xi,y)\Big| \delta_{\tau}(\xi,x)\delta_{\tau}(\eta,y)dS(\xi)dS(\eta)\nonumber\\ 
&&+\int_{\Sigma\times \Sigma}\Big| \left(N_1 - N_2\right)(\xi,y) - \left(N_1 - N_2\right)(\xi,\eta)\Big|\delta_{\tau}(\xi,x)\delta_{\tau}(\eta,y)dS(\xi)dS(\eta)\nonumber\\
&&\leq\int_{\Sigma\times \Sigma}\Big|\nabla_{\xi}\left(N_1-N_2\right)(\hat\xi,y)\Big||x-\xi| \delta_{\tau}(\xi,x)\delta_{\tau}(\eta,y)dS(\xi)dS(\eta)\nonumber\\ 
&&+\int_{\Sigma\times \Sigma}\Big|\nabla_{\eta}\left(N_1-N_2\right)(\xi,\hat\eta)\Big||\eta-y| \delta_{\tau}(\xi,x)\delta_{\tau}(\eta,y)dS(\xi)dS(\eta)\nonumber\\
&&\leq\int_{\Sigma\times \Sigma}\frac{CE}{|\hat\xi -y|^{n-1}}|x-\xi| \delta_{\tau}(\xi,x)\delta_{\tau}(\eta,y)dS(\xi)dS(\eta)\nonumber\\ 
&&+\int_{\Sigma\times \Sigma}\frac{CE}{|\xi -\hat\eta|^{n-1}}|\eta-y| \delta_{\tau}(\xi,x)\delta_{\tau}(\eta,y)dS(\xi)dS(\eta)\nonumber
\end{eqnarray}

where $\hat{\xi}=(1-t)x +t\xi$ and $\hat{\eta}=(1-s)y+s \eta$ for some $t,s\in(0,1)$.

Notice that, given the choice of $\tau$ in \eqref{tau}, we have that
\begin{equation}\label{stima tau}
\tau<\frac{d}{64};\qquad  |x-y| = \mathcal{O}(|\hat\xi - y|);\qquad  |x-y|= \mathcal{O}(|\xi-\hat\eta|),
\end{equation}
therefore
\begin{eqnarray}\label{term 1 tris}
&&\quad \quad \left|\left(N_1-N_2\right)(x,y) - \int_{\Sigma\times \Sigma}\left(N_1 - N_2\right) (\xi,\eta)\delta_{\tau}(\xi,x)\delta_{\tau}(\eta,y)dS(\xi)dS(\eta)\right|  \\
&&\leq\int_{\Sigma\times \Sigma}\frac{2\tau CE}{|x -y|^{n-1}}\delta_{\tau}(\xi,x)\delta_{\tau}(\eta,y)dS(\xi)dS(\eta) \nonumber\\ 
&&\leq\int_{\Sigma\times \Sigma}\frac{CEh}{|x -y|^{n-2}}\delta_{\tau}(\xi,x)\delta_{\tau}(\eta,y)dS(\xi)dS(\eta) \nonumber\\ 
&&=\frac{CEh}{|x -y|^{n-2}}.\nonumber
\end{eqnarray}
The remaining three terms appearing on the right hand side of \eqref{int K 1} only involve at most one of the poles $x,y\in\Sigma$ at the time and are therefore bounded by $CEh$. Therefore we have
\begin{eqnarray}\label{int K 2}
\bigg|\left(K_1-K_2\right)(x,y,w,z) & &\\
- \int_{\Sigma\times \Sigma}\left(N_1 - N_2\right) (\xi,\eta)\left(\delta_{\tau}(\xi,x) - \delta_{\tau}(\xi,z)\right)\left(\delta_{\tau}(\eta,y) - \delta_{\tau}(\eta,w)\right)dS(\xi)dS(\eta)\bigg|& &\nonumber\\ 
\leq \frac{CEh}{|x-y|^{n-2}} + CEh & & \nonumber\\
\leq\frac{CEh + CEhd^{n-2}}{|x-y|^{n-2}}\leq\frac{CEh}{|x-y|^{n-2}}. & &\nonumber
\end{eqnarray}
Recalling that
\begin{equation}\label{stima delta}
\big|\langle\delta_{\tau}(\cdot\:,x) - \delta_{\tau}(\cdot\:,z), \left(\mathcal{N}^{\Sigma}_{\sigma_1} - \mathcal{N}^{\Sigma}_{\sigma_2}\right)\left(\delta_{\tau}(\cdot\:,y) - \delta_{\tau}(\cdot\:,w)\right)\rangle\big|\leq C\tau^{2-n}\varepsilon , 
\end{equation}
where 
$\varepsilon=||\mathcal{N}^{\Sigma}_{\sigma^{(1)}}-\mathcal{N}^{\Sigma}_{\sigma^{(2)}}||_{\mathcal{L}\left(_{0}H^{-\frac{1}{2}}(\partial\Omega), _{0}H^{\frac{1}{2}}(\partial\Omega)\right)}$.
Hence, recalling the definition of $\tau$ in \eqref{tau}, we obtain the following pointwise estimate for $K_1 - K_2$
\begin{eqnarray}\label{stima K1-K2}
|(K_1 - K_2)(x,y,w,z)| &\leq &C\varepsilon{\tau}^{2-n} +\frac{CEh}{|x-y|^{n-2}}\\
&=&C\varepsilon\frac{{h}^{2-n}}{{|x-y|}^{n-2}} +\frac{CEh}{|x-y|^{n-2}}\nonumber\\
& = &\frac{ C\varepsilon h^{2-n} +CEh }{|x-y|^{n-2}} ,\qquad 0<h<\frac{1}{16}\nonumber.
\end{eqnarray}
Minimization of \eqref{stima K1-K2} with respect to $h$ leads to
\begin{equation}\label{stima K1-K2 bis}
|(K_1 - K_2)(x,y,w,z)|\leq \frac{C\varepsilon^{\beta}E^{1-\beta}}{|x-y|^{n-2}},
\end{equation}
that is
\begin{equation}\label{stima K1-K2 tris}
|x-y|^{n-2}|(K_1 - K_2)(x,y,w,z)|\leq C\varepsilon^{\beta}E^{1-\beta},
\end{equation}
with $\beta=\frac{1}{n-1}$. Inequality \eqref{stima K1-K2 tris} is a uniform bound with respect to $x,y\in\Sigma$. {Setting $y=0$, $\nu(0)=-e_{n}$, where $\{e_1,\dots , e_n\}$ denotes the canonical basis of $\mathbb{R}^n$, and writing $x\in\Sigma$ as $x=(x',\varphi(x'))$, with $x' = r\xi'$, with $\xi'\in\mathbb{R}^{n-1}$ and $||\xi'|| =1$, \eqref{neumann kernel holder coefficients} leads to
\begin{equation}\label{stima intermedia g}
\left|\left(g^{(1)}(0)\xi'\cdot\xi'\right)^{\frac{2-n}{2}}  -  \left(g^{(2)}(0)\xi'\cdot\xi'\right)^{\frac{2-n}{2}}\right|\leq C_n \left|\lim_{r\rightarrow 0} r^{n-2}\left(K_1 - K_2\right)(x,0,z,w)\right|.
\end{equation}
where we identified $\xi'$ with $(\xi',0)$.
By Lagrange's theorem and \eqref{ellitticita' g}, we have
\begin{equation}\label{Lagrange}
\left|\left( g^{(1)}(0) - g^{(2)}(0)\right)\xi'\cdot\xi'\right|\leq C \left|\left(g^{(1)}(0)\xi'\cdot\xi'\right)^{\frac{2-n}{2}}  -  \left(g^{(2)}(0)\xi'\cdot\xi'\right)^{\frac{2-n}{2}}\right|
\end{equation}
and by combining \eqref{Lagrange} with \eqref{stima K1-K2 tris} together with \eqref{stima intermedia g}, we obtain
\begin{equation}\label{stima g tan final}
\left|\left( g^{(1)}(0) - g^{(2)}(0)\right)\xi'\cdot\xi'\right|\leq CE^{1-\beta}\varepsilon^{\beta},\quad\textnormal{for\:any}\quad\xi',\:\xi'\in\mathbb{R}^{n-1},\quad ||\xi'||=1,
\end{equation}
}
which concludes the proof,  since, as it is well known 
\begin{eqnarray}
\left|\left| g^{(1; n-1)}(0) - g^{(2; n-1)}(0)\right|\right|_{\mathcal{L}(\mathbb{R}^{n-1},\  \mathbb{R}^{n-1} )} = \sup\displaystyle{ \bigg \{ \left|\left( g^{(1)}(0) - g^{(2)}(0)\right)\xi'\cdot\xi'\right| \ : \xi'\in\mathbb{R}^{n-1}, \ \|\xi'\|=1 \bigg \}}.
\end{eqnarray}

\end{proof}



\section{Stability of the full metric $g$}\label{stability full g}

In the following we shall prove that up to a suitable condition on the geometry of $\Omega$ and on the structure of the metric $g$, the knowledge of the Neumann kernel in a neighborhood of $\Sigma$ allows us to recover the full metric $g$ on $\Sigma$.\\
We assume that $\Sigma$ is $C^{2,\alpha}$ and non-flat as per definition \ref{definition non-flatness}. This means that we can find $P_1, P_2, P_3\in\Sigma$ and a constant $C_0$, $0<C_0<1$ satisfying \eqref{nu1} - \eqref{nu3}. If we denote by $\{e_1,\dots , e_n\}$ the canonical basis in $\mathbb{R}^n$, we can assume, without loss of generality, that $P_1=0\in\Sigma$, that the tangent space to $\partial\Omega$ at $0\in\Sigma$ is $T_0 (\partial\Sigma)=\langle e_1,\dots , e_{n-1}\rangle$ and the outer unit normal to $\partial\Omega$ at $0$ is $\nu(P_1)= -e_n$.  Hence we have
\begin{equation}\label{c_01}
\nu(0)\cdot\nu(P_2) \le 1-C_0,
\end{equation}
\begin{equation}\label{c_02}
 \nu(0)\cdot\nu(P_3)\le 1-C_0,
 \end{equation}
\begin{equation}\label{c_03}
 \nu(P_2)\cdot\nu(P_3) \le 1-C_0 \ 
\end{equation}
and without loss of generality, we can assume that there exists some $0<\gamma_1<\frac{1}{2}$ such that 
\begin{equation}\label{normal P}
\nu(P_2)=\frac{1}{\sqrt{1+{\gamma_1}^2}}\left(-e_n+{\gamma_1}e_{n-1}\right).
\end{equation}
and  some $0\le\gamma_2,\gamma_3\le 1$ such that 
\begin{equation}\label{normal P tilde}
\nu(P_3)=\frac{1}{{\sqrt{1+{\gamma_2}^{2}+{\gamma_3}^2}}}\left(-e_n+\gamma_2 e_{n-1} + \gamma_3 e_{n-2}\right).
\end{equation}
 Let $\Pi_1, \Pi_2, \Pi_3$ be the tangent spaces to $\partial \Omega$ at { $0$, $P_2$, $P_3$}, respectively, with orthonormal basis $\{v_1^1,\dots,v_{n-1} ^1\},$  $ \{v_1^2,\dots,v_{n-1} ^2\},   $ $\{v_1^3,\dots,v_{n-1} ^3\}$
and a linear application $T$,
\begin{equation}
T : Sym_{n}\rightarrow \mathbb{R}^{{3}(n-1)^2},
\end{equation}\label{T}
defined by
\begin{equation}
Tg=\left\{\langle g v_i^k, v_j^k\rangle \: | \: k=1,2,3,\quad i,j=1,\dots, n-1\right\},\ \ \mbox{for any} \ g\in  Sym_{n}\ .
\end{equation}

\begin{proposition}\label{full stability metric}
In the above setting for $\Sigma$, if $\sigma\in L^{\infty}(\Omega\:,Sym_{n})$ satisfies \eqref{ellitticita'sigma} and it is constant on $\overline{\Omega}\cap B_{\rho}$, then the knowledge of $N^{\Omega}_{\sigma}(x,y)$, for every $x,y\in\Sigma$ uniquely determines $g(0)$. Moreover $T$ is a linear and injective application such that 
\begin{eqnarray}\label{norm}
{\|g\|_{{\mathcal{L}({\mathbb{R}}^n, {\mathbb{R}}^n)}}\le C \|Tg\|,}
\end{eqnarray}
where $C>0$ is a constant only depending on $C_0$ and $\|Tg\|$ is the Euclidean norm of $Tg$ in $\mathbb{R}^{3(n-1)^2}$.
\end{proposition}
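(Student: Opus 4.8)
The plan is to separate the statement into two essentially independent parts: first, that the Neumann-kernel data on $\Sigma$ deliver the vector $Tg$; and second, that the map $T$ is injective with the quantitative lower bound \eqref{norm}. The uniqueness claim then reduces to the injectivity. For the first part I would record that, since $\sigma$ is constant on $\overline\Omega\cap B_{\rho}$, so is the metric $g$ of \eqref{g}; in particular $g(0)=g(P_2)=g(P_3)$ is one and the same constant symmetric matrix, which I denote $g$. Each of $P_1=0,P_2,P_3$ lies in the open ball $B_{\rho}$, hence has a relative neighbourhood in $\partial\Omega$ contained in $\Sigma$ on which the kernel is known; since the argument of Lemma \ref{lemma tau tangential} is purely local at the pole, it applies verbatim at each $P_k$ and recovers the tangential restriction $\{\langle g\,v_i^k,v_j^k\rangle\}_{i,j=1}^{n-1}$. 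Assembling the three blocks produces exactly $Tg$, so the data determine $Tg$, and once $T$ is injective, $g=g(0)$ is determined.

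For the injectivity, the first step is to identify the kernel of a single tangential restriction. Writing $\nu_k=\nu(P_k)$ and $\Pi_k=\nu_k^{\perp}$, the map $g\mapsto\{\langle g v_i^k,v_j^k\rangle\}_{i,j}$ is the restriction of the quadratic form $g$ to $\Pi_k$, whose kernel is the $n$-dimensional space $N_k=\{\,x\otimes\nu_k+\nu_k\otimes x : x\in\mathbb{R}^n\,\}$: the inclusion is immediate, and the dimension count $\tfrac{n(n+1)}2-\tfrac{(n-1)n}2=n$ forces equality. Hence $\ker T=\bigcap_{k=1}^3 N_k$, and injectivity amounts to $\bigcap_k N_k=\{0\}$. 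I would verify this by the explicit computation in the normalized frame \eqref{normal P}--\eqref{normal P tilde}: vanishing of $g|_{\Pi_1}$ forces the whole upper-left $(n-1)\times(n-1)$ block to vanish; testing $g|_{\Pi_2}$ against $e_{n-1}+\gamma_1 e_n$ and against $e_i$ ($i\le n-2$) yields $g_{in}=0$ for $i\le n-2$ together with $2g_{n-1,n}+\gamma_1 g_{nn}=0$; and the conditions from $\Pi_3$, obtained by testing against $e_{n-1}+\gamma_2 e_n$ and $e_{n-2}+\gamma_3 e_n$, close the remaining $2\times2$ system for $(g_{n-1,n},g_{nn})$.

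The non-flatness inequalities \eqref{c_01}--\eqref{c_03} are what make this system solvable. From $\nu(0)\cdot\nu(P_2)=(1+\gamma_1^2)^{-1/2}\le 1-C_0$ one gets $\gamma_1^2\ge C_0(2-C_0)/(1-C_0)^2>0$, so the step recovering $g_{in}$ ($i\le n-2$) is controlled by a lower bound on $\gamma_1$; and $\nu(P_2)\cdot\nu(P_3)\le 1-C_0$ forbids $\nu_2=\nu_3$, that is $\gamma_1\ne\gamma_2$ or $\gamma_3\ne0$, which is precisely the non-degeneracy that lets the $2\times2$ system be solved and gives $g=0$. I would then obtain \eqref{norm} by a compactness argument that avoids tracking constants through this case split. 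Since $\|Tg\|^2=\sum_{k=1}^3\|g|_{\Pi_k}\|_{\mathrm{HS}}^2$ depends only on the three planes and not on the chosen orthonormal bases, the least-singular-value functional $(\nu_1,\nu_2,\nu_3)\mapsto\min_{\|h\|=1}\|Th\|$ is continuous and, by the injectivity just proved, strictly positive on the set of admissible triples compatible with the normalization and with the bounds forced by $C_0$ and the a-priori data; that set is compact, so the functional attains a positive minimum $c=c(C_0)>0$, giving $\|g\|_{\mathcal{L}(\mathbb{R}^n,\mathbb{R}^n)}\le C\|Tg\|$ with $C=C(C_0)$.

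The main obstacle is exactly the \emph{uniformity} of this constant: one must ensure injectivity at \emph{every} admissible configuration — including nearly-degenerate ones where two tangent planes almost coincide or the three normals become coplanar — so that the minimum stays bounded away from zero. Equivalently, in the explicit route one must bound the determinant of the final $2\times2$ system from below by a quantity controlled by $C_0$ uniformly across the two subcases $\gamma_1\ne\gamma_2$ and $\gamma_3\ne0$, which requires combining the two $\Pi_3$-equations rather than using either alone; this quantitative non-degeneracy, read off from $\nu(P_2)\cdot\nu(P_3)\le 1-C_0$, is the delicate point of the proof.
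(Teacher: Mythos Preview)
Your proposal is correct, and the computational skeleton --- read off the upper-left $(n-1)\times(n-1)$ block from $\Pi_1$, extract $g_{in}$ for $i\le n-2$ and the relation $2g_{n-1,n}+\gamma_1 g_{nn}$ from $\Pi_2$, then close the remaining two entries using the $\Pi_3$-equations from $e_{n-1}+\gamma_2 e_n$ and $e_{n-2}+\gamma_3 e_n$ --- is exactly what the paper does. The one genuine difference is how you pass from injectivity to the quantitative bound \eqref{norm}. The paper does not use compactness: it first extracts from \eqref{c_01}--\eqref{c_03} explicit lower bounds on the parameters (namely $\gamma_1\ge k_0$, and a dichotomy: either $\gamma_3\ge k_0/\sqrt{2}$, or else $\gamma_2\ge k_0/\sqrt{2}$ together with $|\gamma_1-\gamma_2|$ bounded below by an explicit function of $C_0$), then assembles the three residual equations into a $3\times 2$ system $\mathcal{A}G=F$ for $G=(g_{n-1,n},g_{nn})^T$ and, according to the dichotomy, inverts one of the two $2\times 2$ submatrices of $\mathcal{A}$ whose determinant is controlled from below. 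Your soft compactness argument is cleaner and avoids the case split, but yields a non-constructive constant, whereas the paper's route gives an explicit $C(C_0)$. One small correction to your last paragraph: in the explicit route one does not need to \emph{combine} the two $\Pi_3$-equations; in each subcase a single one of them, paired with the $\Pi_2$-relation, suffices to invert the system.
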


\begin{proof}
We reformulate conditions \eqref{c_01} - \eqref{c_03} in terms of $\gamma_1$, $\gamma_2$, $\gamma_3$. From condition \eqref{c_01} we get
\begin{equation}
\nu(0)\cdot\nu(P_2) = \frac{1}{\sqrt{1+ \gamma_1^2}}\le 1-C_0,
\end{equation}
which leads to 
\begin{equation}\label{condizione A}
\gamma_1\ge k_0
\end{equation}
where $k_0=\sqrt{\frac{1-(1-C_0)^2}{(1-C_0)^2}}$. Furthermore, from \eqref{c_02} we obtain
\begin{equation}
\nu(0)\cdot\nu(P_3) = \frac{1}{{\sqrt{1+\gamma_2^2+ \gamma_3^2 }}} \le 1 - C_0,
\end{equation}
which leads to  
\begin{equation}\label{1}
\gamma_2^2 + \gamma_3^2 \ge {k_0}^2 \  .
\end{equation}
Hence we get that 
\begin{eqnarray}\label{condizione B}
\gamma_2\ge \frac{k_0}{\sqrt{2}}\ \ \
\mbox{or}
\ \ \gamma_3\ge \frac{k_0}{\sqrt{2}} \ . 
\end{eqnarray}
Finally, by condition \eqref{c_03}, we have  
\begin{equation}\label{thirdcondition}
\nu(P_2)\cdot\nu(P_3) = \frac{1+\gamma_1 \gamma_2}{\sqrt{1+ \gamma_1^2}\sqrt{1+\gamma_2^2}}\frac{\sqrt{1+{\gamma_2}}^2}{\sqrt{1+{\gamma_2}^2+ \gamma_3^2}}\le 1- C_0 \ .
\end{equation}  
By \eqref{thirdcondition} we have that 
\begin{eqnarray}\label{2}
& & (1+\gamma_1\gamma_2)^2\nonumber\\
& \le &(1-C_0)^2(1+\gamma_1^2)(1+\gamma_2^2 +\gamma_3^2) \nonumber\\
& \le &(1-C_0)^2(\gamma_1-\gamma_2)^2 + (1-C_0)^2(1+2\gamma_1\gamma_2 +\gamma_1^2\gamma_2^2 +\gamma_3^2 +\gamma_1^2\gamma_3^2).
\end{eqnarray}
Hence 
\begin{eqnarray}\label{3}
&&[1- (1-C_0)^2](1+\gamma_1\gamma_2)^2 - (1-C_0)^2\gamma_3^2(1+\gamma_1^2) \le (1-C_0)^2(\gamma_1-\gamma_2)^2.
\end{eqnarray}
If $\gamma_3\le \frac{k_0}{\sqrt{2}}$, from \eqref{3} and the fact that $\gamma_1\le \frac{1}{2}$ we get that 
\begin{eqnarray}\label{4}
&&[1- (1-C_0)^2](1+\gamma_1\gamma_2)^2 - \frac{5}{8}(1-C_0)^2{k_0}^2 \le (1-C_0)^2(\gamma_1-\gamma_2)^2, \ 
\end{eqnarray}
which leads to 
\begin{eqnarray}
(\gamma_1-\gamma_2)^2\ge \frac{3}{8}\frac{[1- (1-C_0)^2]}{2(1-C_0)^2} \ .
\end{eqnarray}
In conclusion, we have the following two cases: 
\begin{eqnarray}\label{condizionea}
\gamma_1\ge k_0 \ , \quad \gamma_3\ge \frac{k_0}{\sqrt{2}}
\end{eqnarray}
or 
\begin{eqnarray}\label{condizioneb}
\gamma_1\ge k_0 \ ,\quad \gamma_2\ge \frac{k_0}{\sqrt{2}}\ \ \ \mbox{and}\ \ |\gamma_1 -\gamma_2|\ge \sqrt{\frac{3[1- (1-C_0)^2]}{16(1-C_0)^2}} . 
\end{eqnarray}
We observe that the spaces $\Pi_1,\Pi_2,\Pi_3$ are generated by 
\begin{eqnarray*}
& & \left\{ e_1,\dots,e_{n-1}\right\},\\
& & \left\{ e_1,\dots,e_{n-2},\frac{e_{n-1}+\gamma_1 e_n}{\sqrt{1+\gamma_1^2}}\right\},\\
& &  \left\{ e_1,\dots,e_{n-3}, e_{n-2}+\gamma_3 e_n, e_{n-1}+\gamma_2 e_n\right\},
\end{eqnarray*}
respectively, although the latter is not an orthonormal basis. Let $\{v_1, \dots, v_{n-1}\}$ be an orthonormal basis for $\Pi_3$ and let 
\[w_1=e_{n-2}+\gamma_3 e_n, \qquad w_2=e_{n-1}+\gamma_2 e_n\ .\] 
By using the repeated index notation we have 
 \begin{equation}\label{w}
w_i = \langle w_i, v_j \rangle v_j,\qquad\textnormal{for}\quad i=1,2  
\end{equation} 
and 
\begin{eqnarray}\label{grepeatedindex} 
\langle gw_i,w_j\rangle = \langle g v_l,v_k\rangle \langle w_i,v_l\rangle \langle w_j,v_k\rangle ,  
\end{eqnarray}
for $i,j=1,2$ and  $k,\: l=1,\dots, n-1$.
Noticing that 
\begin{eqnarray}
\sum_{j=1}^{n-1}\langle w_i,v_j\rangle^2 = \|w_i\|^2\le \max\{1+\gamma_2^2,1+\gamma_3^2 \}\le 2 ,
\end{eqnarray}
by \eqref{grepeatedindex} we have that 
\begin{eqnarray}
|\langle gw_i,w_j\rangle |\le C \|Tg\|,
\end{eqnarray}
where $C$ is an absolute constant. Now, from the tangential component of $g$ over $\Pi_1$, we recover the upper left submatrix of $g$, in particular 
\begin{eqnarray}\label{stimaminore}
|g_{i,j}|=|\langle ge_i,e_j \rangle |\le \|Tg\|,
\end{eqnarray}
for any $i,j=1,\dots, n-1$. From the tangential component of $g$ over $\Pi_2$ we know the following quantities 
\begin{eqnarray}
\bigg\langle g e_i, \frac{e_{n-1}+\gamma_1 e_n }{\sqrt{1+\gamma_1^2}}\bigg\rangle =\frac{1} {\sqrt{1+\gamma_1^2}}(g_{n-1,i} + \gamma_1 g_{n,i}),
\end{eqnarray}
with $i=1,\dots, n-2$ . 
Hence since by \eqref{stimaminore} we can control $g_{n-1,i} $, we get that 
\begin{eqnarray}\label{stimaultimacolonnan-2}
|g_{i,n}|\le C \|Tg\|,
\end{eqnarray}
for any $i=1,\dots, n-2$. To estimate the remaining entries $g_{n-1 , n}$, $g_{nn}$ of $g\in Sym_n$, we consider the following known quantity
\begin{equation}
\bigg\langle g\left( \frac{e_{n-1}+\gamma_1 e_n }{\sqrt{1+\gamma_1^2}}\right), \left(\frac{e_{n-1}+\gamma_1 e_n }{\sqrt{1+\gamma_1^2}}\right)\bigg\rangle=
\frac{1}{1+\gamma_1^2}( g_{n-1,n-1} +2\gamma_1 g_{n-1,n} + \gamma_1^2 g_{n,n} ).
\end{equation}
Hence, since by \eqref{stimaminore} we can control $g_{n-1,n-1}$, we get 
\begin{eqnarray}\label{stimasomma}
|2g_{n-1,n} + \gamma_1 g_{n,n}|=|F_1|\le C \|Tg\|,\qquad\textnormal{for}\quad i=1,\dots, n-2,
\end{eqnarray}
where 
\[F_1= \frac{1}{\gamma_1}\Big( \big\langle  g ({e_{n-1}+\gamma_1 e_n }), {e_{n-1}+\gamma_1 e_n }\big\rangle  -  g_{n-1,n-1} \Big) .\]
From the tangential component of $g$ over $\Pi_3$ we know the quantity 
\begin{equation}
\bigg\langle g \left(\frac{e_{n-2}+\gamma_3 e_n }{\sqrt{1+\gamma_3^2}}\right), \left(\frac{e_{n-2}+\gamma_3 e_n }{\sqrt{1+\gamma_3^2}}\right)\bigg\rangle=
\frac{1}{1+\gamma_3^2}( g_{n-2,n-2} +2\gamma_3 g_{n-2,n} + \gamma_3^2 g_{n,n}).
\end{equation}
Hence by \eqref{stimaminore} and by \eqref{stimaultimacolonnan-2} we have that 
\begin{eqnarray}\label{stimagnn2}
|\gamma_3^2 g_{n,n}|=|F_2|\le  \|Tg\|,
\end{eqnarray}
where 
\[F_2= \big\langle g ({e_{n-2}+\gamma_3 e_n }), ({e_{n-2}+\gamma_1 e_n })\big\rangle - g_{n-2,n-2} -2\gamma_3 g_{n-2,n}.\]

Finally, we also know 
\begin{equation}
\bigg\langle g  \left(\frac{e_{n-2}+\gamma_2 e_n }{\sqrt{1+\gamma_2^2}}\right), \left(\frac{e_{n-2}+\gamma_2 e_n }{\sqrt{1+\gamma_2^2}}\right)\bigg\rangle =
\frac{1}{{(1+\gamma_2^2)}}\: \big( g_{n-1,n-1} +2\gamma_2 g_{n-1,n} + \gamma_2^2 g_{n,n} \big).
\end{equation}
Hence by \eqref{stimaminore} we have that 
\begin{eqnarray}\label{stimagnn}
|2\gamma_2 g_{n-1,n} + \gamma_2^2 g_{n,n} |=|F_3|\le  \|Tg\|,
\end{eqnarray}
where 
\[F_3 = \big\langle g \left({e_{n-2}+\gamma_2 e_n }\right), \left({e_{n-2}+\gamma_2 e_n }\right)\big\rangle - g_{n-1,n-1}.\]
Collecting together the above calculations lead to the linear system 
\[{\mathcal A} G= F,\]
where
$${\mathcal A} =\left(\begin{array}{ccc}2 & \gamma_1 \\0 & \gamma_3^2  \\ 2\gamma_2 & \gamma_2^2,
\end{array}\right)
$$
$G=(g_{n-1,n}\quad g_{n,n})^T$ and $F=(F_1 \:F_2\:F_3)^T$. If case \eqref{condizionea} holds, then we recover $G$ by inverting the square matrix 
$${\mathcal A_1} =\left(\begin{array}{ccc}2 & \gamma_1 \\0 & \gamma_3^2   
\end{array}\right).
$$
Otherwise, if case \eqref{condizioneb} holds, then we recover $G$ by inverting the square matrix 
$${\mathcal A_2} =\left(\begin{array}{ccc}2 & \gamma_1 \\2\gamma_2 & \gamma_2^2   
\end{array}\right),
$$
which concludes the proof. 
 \end{proof}


{\begin{proof}[Proof of theorem \ref{teorema principale}]
Let $y$, $\rho$ and $\Sigma$ be defined by \eqref{definizione Sigma}. From \eqref{g} we have
\begin{equation}\label{sigma i}
\sigma_i (y)= \left(\det g_i(y)\right)^{\frac{1}{2}} g_i^{-1}(y),\qquad\textnormal{for}\quad i=1,2,
\end{equation}
therefore
\begin{eqnarray}
\sigma_1(y) - \sigma_2(y) & = & \left[\left(\det g_1(y)\right)^{\frac{1}{2}} - \left(\det g_2(y)\right)^{\frac{1}{2}}\right]g_1^{-1}(y)\nonumber\\
& + & \left(\det g_2(y)\right)^{\frac{1}{2}}\left(g_1^{-1}(y) - g_2^{-1}(y)\right)\nonumber\\
& = & \frac{\det g_1(y) - \det g_2(y)}{\left(\det g_1(y)\right)^{\frac{1}{2}} + \left(\det g_2(y)\right)^{\frac{1}{2}}} \:g_1^{-1}(y)\nonumber\\
& + & \left(\det g_2(y)\right)^{\frac{1}{2}} g_1^{-1}(y)\left(g_2(y) - g_1(y)\right)g_2^{-1}(y),
\end{eqnarray}
which leads to
\begin{equation}\label{stima finale}
||\sigma_1(y) - \sigma_2(y)||_{{{\mathcal{L}({\mathbb{R}}^n, {\mathbb{R}}^n)}}}\leq C ||g_1(y) - g_2(y)||_{{{\mathcal{L}({\mathbb{R}}^n, {\mathbb{R}}^n)}}}.
\end{equation}
In view of Proposition \ref{lemma neumann map and K} and Proposition \ref{full stability metric} the proof is complete.
\end{proof}}

\section*{Acknowledgments}

RG was partly supported by Science Foundation Ireland under Grant number 16/RC/3918. The work of ES  was performed under the \text{PRIN grant No. 201758MTR2-007}. ES has also been supported by Gruppo Nazionale per l'Analisi \text{Matematica,} la Probabilità e le loro applicazioni (GNAMPA) by the grant "Problemi inversi per equazioni alle derivate parziali". 

\end{document}